\def\nBl{{\sqrt[n]{\rm Bl}}}
\def\NBl{{\sqrt[N]{\rm Bl}}}
\def\NN{{\bbN}}
\def\GG{{\bbG}}
\def\QQ{{\bbQ}}
\def\cProj{{\mathcal{P}roj}}
\def\nor{{\rm nor}}
\def\ord{{\rm ord}}
\def\mord{{\underline{\rm ord}}}
\def\hatcO{{\widehat\cO}}
\def\hatcI{{\widehat\cI}}
\def\hatcJ{{\widehat\cJ}}
\def\.{{,\dots,}}
\def\cK{{\calK}}
\def\cJ{{\calJ}}
\def\cR{{\calR}}
\def\cA{{\calA}}
\def\cI{{\calI}}
\def\cO{{\calO}}
\def\cC{{\calC}}
\def\cF{{\calF}}
\def\cD{{\calD}}
\def\cQ{{\calQ}}
\begin{document}
\title{Dream resolution and principalization I: enough derivations}
\author{Michael Temkin}

\thanks{This research is supported by ERC Consolidator Grant 770922 - BirNonArchGeom and BSF grant 2018193. The author is grateful to D. Abramovich, A. Belotto and J. W{\l}odarczyk for valuable discussions.}

\address{Einstein Institute of Mathematics\\
               The Hebrew University of Jerusalem\\
                Edmond J. Safra Campus, Giv'at Ram, Jerusalem, 91904, Israel}
\email{michael.temkin@mail.huji.ac.il}

\subjclass{14E15}
\keywords{Resolution of singularities, principalization, weighted blowings up, quasi-excellent schemes}
\maketitle
\begin{abstract}
This is the first paper in a project on extending the dream principalization and resolution methods of \cite{ATW-weighted}, \cite{McQuillan} and \cite{Quek} to quasi-excellent, logarithmic and relative settings. We show that the main results of \cite{ATW-weighted} extend to regular schemes with enough derivations and are functorial with respect to all regular morphisms. This generality is already wide enough to formally imply that the same results hold in other categories, such as complex and $p$-adic analytic spaces. Our method has many common points with that of \cite{ATW-weighted}, but the accent is now shifted towards the study of weighted centers and their coordinate presentations. Not only we hope that this is a bit simpler and more conceptual, this method will be easily applied in the logarithmic and relative settings in the sequel.
\end{abstract}

\section{Introduction}

\subsection{Background and motivation}

\subsubsection{Some recent developments}
Much recent progress in resolution of singularities in characteristic zero is related to extending the classical setting in few aspects:

(0) {\em Quasi-excellent schemes and other categories.} Some (but not all) resolution results were extended to qe schemes by a black box method, e.g. see \cite{non-embedded}. First one establishes the case of formal varieties via deep algebraization results, such as Elkik's theory, and then induction on codimension (or a localization method which was used already by Hironaka) allows to deduce the general qe case at cost of producing a more complicated algorithm (even for varieties), see \cite[Section~4]{non-embedded}. In \cite{Temkin-survey} an alternative to the first step was suggested -- almost all classical proofs that work for varieties apply (with minimal changes) to arbitrary schemes with enough derivations. This covers formal varieties (as well as analytic spaces) and allows to eliminate the subtle algebraization step. Unfortunately, this approach was never worked out in detail, and the only paper where is was applied is \cite{ATW-relative}, where the context is much more general and complicated.

(1) {\em Logarithmic and relative settings.} One can work with log schemes instead of schemes, see \cite{ATW-principalization}, which allows to encode the boundary in a conceptual way and becomes absolutely critical in order to prove functorial semistable reduction (or relative resolution) theorems, see \cite{ATW-relative}.

(2) {\em Weighted algorithms.} One can allow basic (quasi) blowing up operations, which are more general than just blowing up smooth centers. Already in the case of log schemes, in addition to blowing up submonomial centers one has to allow certain weighted centers or root stack constructions, resulting in non-representable modifications. Log smooth functoriality cannot be achieved without this, see \cite{ATW-principalization}. However, the full strength of weighted centers was explored later in \cite{McQuillan} and \cite{ATW-weighted}, where a simple memoryless algorithm was constructed for schemes. Although the resolution is the same in the end, \cite{ATW-weighted} uses basic maximal contacts theory and only studies varieties, while \cite{McQuillan} uses a more explicit language of filtrations and applies to arbitrary qe schemes.

(3) {\em Combinations.} Weighted versions should exist in the general logarithmic and relative settings. So far, the first step was done in \cite{Quek} by constructing a weighted resolution and principalization for log varieties. In addition, a weighted algorithm for principalization on foliated varieties has been recently developed in \cite{ABTW}.

\begin{rem}
(i) A most natural way to try to achieve principalization/resolution (in various contexts) is to assign to an ideal/scheme an invariant and a canonical modification of a simple explicit form, such as a blowing up, so that the invariant drops after this operation. In particular, this leads to a memoryless algorithm which improves the singularity iteratively. It was a long journey to discover such algorithms, especially because they do not exist in the classical setting when the only allowed modifications are blowings up of smooth centers. Weighted blowings up provided the critical missing ingredient. The price one has to pay is that the technical setting becomes more complicated -- one has to consider stacks and allow a wider class of basic modifications. Not all community accepts this terminology so far, but I suggest to call the class of memoryless algorithms {\em dream algorithms}. They are the simplest ones from the algorithmic point of view, and because of this they allow maximal flexibility. 

(ii) As we remarked above, dream algorithms are expected to exist in the logarithmic and relative settings, and working this out is one of the main goals of this series of papers. Furthermore, in the foliated situation the setting is so complicated that a principalization method which only blows up smooth centers probably does not exist. Nevertheless, a dream foliated principalization does exist, as was shown very recently in \cite{ABTW}. Finally, extending a classical algorithm to the full generality of qe schemes is also a difficult task, which was not solved completely. The situation with the dream algorithm is better -- McQuillan develops such an algorithm in \cite{McQuillan}, and another description of a qe dream algorithms will be given in the sequel paper \cite{dream_qe}.
\end{rem}

\subsubsection{Goals of the project}
This paper originated in my joint project with G. Papas, whose original goal was to extend the results of \cite{ATW-relative} to singular base log schemes, including the standard log point and its non-reduced Kummer covers, and to establish a dream algorithm in this setting, thereby extending the results of \cite{Quek} to the largest generality. In this situation one cannot work with $\QQ$-ideals and usual blowings up and one of the main new ingredients we introduce in our work in progress are new types of modifications combining properties of usual blowings up and log blowings up. It became apparent that it will be convenient (if not necessary) to adjust the methods of \cite{ATW-weighted} and \cite{Quek}, rethink the notion of canonical centers, etc., so we decided to split the project into pieces. In addition, it turned out that the new method combined with some additional ideas can deal with the general qe case as well, so this was added as one of the goals of the project. Finally, there is also a goal of simplifying the method, popularizing it and making more accessible for a wider audience. Publishing our paper \cite{ATW-weighted} took four years and it was rejected thrice, which indicates that we failed to properly explain to the community the results, the methods and their novelty and significance.

For these reasons, we decided to split the current project into a series of papers so that each of them focuses on just a few aspects, and start with papers which do not use log geometry and involve a minimal amount of stack-theoretic issues. The current paper is devoted to extending the theory of canonical centers and dream principalization from \cite{ATW-weighted} to schemes (or stacks) with enough derivations. We reorganize proofs and terminology, but the main moving force is the same -- a basic maximal contact theory and derivations. Schemes with enough derivations provide the largest generality in which these methods work. In the second paper we will use these results and a certain descent and a spreading out procedures to generalize them to arbitrary qe schemes. This will involve new techniques -- Tschirnhaus coordinates and tubes in singular varieties. In the third paper we will extend the techniques of the first two to the logarithmic context, and the last paper of the project will be devoted to the most general case -- the relative case with an arbitrary base.

\subsection{Overview of the paper}

\subsubsection{Main results}
The main goal of this paper is to construct the theory of canonical centers associated to ideals on regular schemes with enough derivations. The main results are that such an ideal $\cI\subseteq\cO_X$ is contained in a unique weighted center $\cJ=[t_1^{d_1}\.t_n^{d_n}]$ such that the invariant $(\ud,V(\cJ))$ of $\cJ$ is maximal possible. Furthermore, this canonical center is compatible with any regular morphism $X'\to X$ and blowing it up with an appropriate regularizing root construction one obtains a principal transform $\cI'$ of $\cI$ whose canonical center has a smaller multiorder $\ud'<\ud$. These two results will be essentially used in the sequel paper \cite{dream_qe} in the proof of their generalization to qe schemes.

As an application one can easily deduce dream principalization and resolution algorithms on schemes and other geometric spaces with enough derivations. This is a standard reduction, which is only briefly outlined in the paper because these applications will not be used in \cite{dream_qe}, but will be subsumed there.

\subsubsection{The method}
At first glance the claim about existence of the maximal $\cI$-admissible weighted center looks very natural and one might expect that it should be easily accessible. Here are two warning marks: (1) this fails in positive characteristic, for example, due to an example by W{\l}odarczyk: if $X=\Spec(\bbF_2[x,y,z])$, then the Whitney umbrella $V(x^2+y^2z)$ has multiorder $(2,3,3)$ at each closed point of the $z$-axis and multiorder $(2,2)$ at the generic point, (2) this fails at non-excellent regular local rings (whose completion is not regular over them) because there is no resolution theory over them, see \cite[Example~2.1.5]{dream_qe} for a concrete example. This indicates that no robust elementary proof exists, and it is not surprising that derivations and a simplest form of the theory of maximal contact and coefficients ideals shows up. As usual, one achieves induction on dimension by restricting the coefficients ideal of $\cI$ onto a hypersurface of maximal contact, and the main issue is to prove independence of the maximal contact. The latter is done by coordinate changes which replace the maximal contact but keep the canonical center.

The approach of \cite{ATW-weighted} essentially uses homogenization of coefficients ideals, analogously to W{\l}odarczyk's approach in the classical resolution. The main simplification line of the current paper, which was also motivated by an idea of W{\l}odarczyk on Rees algebras of the canonical ideal, is to apply a sequence of operations which increase ideals, preserve inclusions but keep weighted centers unchanged. If we manage to increase $\cI$ by such operations getting a center $\cJ$, then the latter is automatically the canonical center of $\cI$. The first such operation just replaces $\cI$ with the coefficients ideal, the next one similarly increases the restriction of $\cI$ onto a maximal contact, etc. Thus, though the sequence of operations depends on the choice of maximal contacts the center we obtain in the end is canonical on the nose.

A complementary point of view is that the new method does most of the work with changes of coordinates on the level of centers only. In a sense the main point is to develop the theory of maximal contacts and coefficients ideals for weighted centers, which are ideals (or $\QQ$-ideals) of a very special shape. Studying them is a small nice problem which can be assigned as a good undergraduate project, and these theories just reduce to describing possible presentations of weighted centers in the form $\cJ=[\ut^\ud]$. So, to large extent the accent shifts to the elementary question of studying weighted centers themselves, and this theme will persist in the sequel papers in all other contexts -- qe, logarithmic, relative.

\subsubsection{Weighted centers}
We conclude the introduction with a brief overview of the paper. Section~\ref{tubsec} is devoted to studying the centers and blowings up along them. We decided to choose the formalism of $\QQ$-ideals defined by formal roots of integrally closed ideals, see \S\ref{Qsec}. This provides both a shortest route and a sufficient intuition. In \cite{dream_qe} we will even show that, in fact, a canonical center is always controllable by its rounding, which is an actual ideal (even though $d_i$ can be non-integral), but this is not essential in this paper, so we postpone studying this question. Our approach fits the definition of weighted blowings up in \S\ref{blowsec}, where we simply blow up the appropriate power of the ideal and then apply the normalized root stack construction along the exceptional divisor.

The main results of Section \ref{tubsec} are in \S\ref{tubesec}, where we study weighted centers on regular schemes and prove that the multiorder $\ud=(d_1\.d_n)$ is an invariant of a weighted center $\cJ=[\ut^\ud]$, any element $t\in\cJ$ of minimal order can be chosen as the first coordinate in a presentation of $\cJ$, and other coordinates can be lifted in any way from a presentation of the restricted center $\cJ|_{V(t)}$, see Theorem~\ref{centerindlem}. This theorem and Lemma~\ref{diffcenter} about derivations exhaust the theory of centers and their presentations we need.

\subsubsection{Canonical centers}
The theory of canonical centers is developed in Section \ref{canonicalsec}. We start with recalling generalities about derivations, maximal contacts and coefficients ideals in \S\ref{derivsec}, \S\ref{coefsec}. The only non-standard result is a simple key Lemma~\ref{restrlem} which expresses $\cI$-admissibility of  a center in terms of the coefficients ideal restricted to a maximal contact. Existence of the canonical center is deduced in Theorem~\ref{localcanonicalcenter} by a straightforward induction. In the same way, we prove in Theorem~\ref{dropth} that the multiorder drops after blowing up the canonical center. This time we also use Lemma~\ref{coefftransf} which provides a control on transform of derivations and coefficients ideals via a classical computation. We conclude the paper with a brief discussion of applications in \S\ref{appsec}.

Finally, let us explain the choice of terminology. Our definitions apply to arbitrary regular schemes rather than those with enough derivations. In particular, this is needed in order to include the general qe case that will be studied in \cite{dream_qe}. In such generality, a maximal $\cI$-admissible center does not have to exist and if exists it is not clear if it behaves well, commutes with localizations, etc. We by-pass this by localising the definition -- imposing the local maximality condition in the definition of canonical centers, which allows to glue local constructions on the nose. To some extent this is the matter of taste because a posteriori maximality and canonicity on schemes with enough derivations (or just excellent schemes) are equivalent, and this property is even compatible with arbitrary regular morphisms.

\subsection{Conventiones}
All schemes in this paper are assumed to be noetherian of characteristic zero and we only mention this assumption in the formulations of main results. We use underlines to denote tuples: $\ud=(d_1\.d_n)$, $\ut^\ud=(t_1^{d_1}\.t_n^{d_n})$, etc. For shortness, by an element of a sheaf we always mean a global section: $f\in\cO_X$ is a regular function on $X$, $\partial\in\cD_X$ is a global derivation $\cO_X\to\cO_X$, etc. Given an ideal $\cI\subseteq\cO_X$ on $X$ we denote the corresponding closed subscheme by $V_X(\cI)=\Spec_X(\cO_X/\cI)$ or simply by $V(\cI)$. When no confusion is possible this can also refer to the underlying closed set. The normalization of a reduced scheme $X$ will be denoted $X^\nor$. For an ideal $\cI$ on $X$ by $\cI^\nor$ we denote the integral closure of $\cI$. We will often use the notation $\hatX_x=\Spec(\hatcO_x)$ to denote the formal completion of a scheme $X$ at a point $x$, and then $\hatcI_x=\cI\hatcO_x$. Also, given an open $U\subseteq X$ we denote by $\cI_U=\cI|_U$ the restriction of $\cI$ on $U$.

\section{Weighted centers}\label{tubsec}
In this section we introduce and study weighted centers on regular schemes. They are very simple objects given by explicit equations. The only slightlyt subtle question is about the choice of a formalism because in general such centers are rational powers of ideals. We choose the formalism of $\QQ$-ideals and think that it is the most elementary one.

\subsection{Regularity}

\subsubsection{Regularity and parameters}\label{regsec}
A scheme $X$ is {\em regular} if all local rings are regular. A morphism is {\em regular} if it is flat and its fibers are geometrically regular. With the characteristic zero assumption, the word ``geometric'' can be omitted, absolute regularity just means regularity over $\QQ$ and formally locally such schemes are described as $\hatcO_x=k\llbracket\ut\rrbracket$, where $k=k(x)$ and $t_1\.t_n$ is a regular family of parameters. Furthermore, if $f\:Y\to X$ is a regular morphism taking $y\in Y$ to $x$, then formally locally $f$ can be described as $\hatcO_y=l\llbracket\ut,\us\rrbracket$, where $l=k(y)$.

\subsubsection{Parameters along a subscheme}
If $X$ is regular and $V\into X$ is a closed subscheme of codimension $d$, then by a {\em family of regular parameters along $V$} we mean $t_1\.t_d\in\cO_X$ such that $V=V_X(t_1\.t_d)$. Then automatically $V$ is regular and $t_1\.t_d$ is a partial family of regular parameters at any $x\in V$.

\subsubsection{Lifting parameters}\label{liftparam}
If $f\:Y\to X$ is a regular morphism and $X'\into X$ is a regular closed subscheme, then its pullback $Y'=X'\times_XY$ is regular. In particular, the pullback of a partial family of regular parameters on $X$ is again a partial family of regular parameters on $Y$.

\begin{lem}\label{completelem}
If $f\:Y\to X$ is a morphism of schemes of characteristic zero which is regular at $y\in Y$, and $x=f(y)$, then the homomorphism $\hatcO_x\to\hatcO_y$ is regular.
\end{lem}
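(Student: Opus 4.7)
The plan is to verify directly the two defining conditions of a regular homomorphism for $\hat A \to \hat B$, where $A = \cO_x$ and $B = \cO_y$: flatness and geometric regularity of all fibers. Since we are in characteristic zero, all residue fields are perfect, so geometric regularity of a fiber reduces to its ordinary regularity.

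For flatness, I would first observe that $A \to B \to \hat B$ is a composition of flat maps, hence flat, so $\mathrm{Tor}_1^A(k_A, \hat B) = 0$. Because $A \to \hat A$ is flat with $\hat A \otimes_A k_A = k_A$, the change-of-rings spectral sequence collapses to give $\mathrm{Tor}_1^{\hat A}(k_A, \hat B) = \mathrm{Tor}_1^A(k_A, \hat B) = 0$, and $\hat B / \mathfrak{m}_{\hat A} \hat B$ is trivially flat over the field $k_A$. The local flatness criterion then yields flatness of $\hat B$ over $\hat A$; its hypothesis is met because $\hat B$ is $\mathfrak{m}_{\hat A}$-adically ideally separated (any finite $\hat A$-module tensored with $\hat B$ is a finite $\hat B$-module, hence $\mathfrak{m}_{\hat B}$-adically, and a fortiori $\mathfrak{m}_{\hat A}$-adically, separated).

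For fiber regularity, the closed fiber $\hat B \otimes_{\hat A} k_A = \hat B / \mathfrak{m}_A \hat B$ is the completion of the regular Noetherian local ring $B/\mathfrak{m}_A B$, hence regular. For a general prime $\hat{\mathfrak{p}} \subset \hat A$ I would invoke the classical fact that regularity of a local homomorphism of Noetherian local rings is preserved under completion (see e.g.\ EGA IV, 7.8.3, or Matsumura CRT, \S32); a clean approach uses Popescu's desingularization theorem to write $A \to B$ as a filtered colimit of smooth $A$-algebras of finite type, to check the claim for smooth maps (where it is essentially automatic as $\hat A \to \hat A[\ut]^\wedge$ is again essentially smooth), and then to pass to the colimit.

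The main obstacle is the treatment of primes $\hat{\mathfrak{p}} \subset \hat A$ that do not come from primes of $A$ by extension---such ``formal'' primes of the completion can behave badly over non-excellent local rings, and both the characteristic zero assumption and the full regularity of $A \to B$ are needed in an essential way to control them.
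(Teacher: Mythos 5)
Your proposal does not complete the proof, and the route it takes is fundamentally different from (and considerably harder than) the paper's. The paper's argument is short and structural: it implicitly uses that $X$ (and hence $Y$) is regular — this is the standing assumption of \S\ref{regsec} — and applies Cohen's structure theorem after lifting parameters. Choosing a regular system of parameters $t_1\.t_n$ of $\cO_x$ and elements $s_1\.s_m$ of $\cO_y$ restricting to a regular system of parameters of the (regular) fiber $\cO_y/m_x\cO_y$, one gets identifications $\hatcO_x\cong k(x)\llbracket\ut\rrbracket$ and $\hatcO_y\cong k(y)\llbracket\ut,\us\rrbracket$, and in characteristic zero such a map of power series rings is regular (the field extension is separable, and complete regular local rings are excellent). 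No general theory of ``formal primes'' is needed once the completions are presented in this explicit form.

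Several concrete gaps in what you wrote. First, and most seriously, you explicitly concede you do not know how to handle primes of $\hat A$ not extended from $A$; that is precisely where the content of the lemma lives, so this is a genuine hole, not a routine detail. Second, the ``classical fact'' you invoke — that regularity of a local homomorphism of Noetherian local rings is preserved under completion — is not a theorem in that generality; regular local rings need not be G-rings, and for such $A$ the passage to $\hat A$ can destroy regularity of the map. It is exactly the regularity (or excellence) of $\cO_x$ that rescues the statement, and your argument never uses it. Third, the Popescu route does not close the gap: writing $A\to B$ as a filtered colimit of smooth $A$-algebras does not give you $\hat A\to\hat B$ as a colimit of anything tractable, because $\mathfrak m$-adic completion does not commute with filtered colimits, and the base case $\hat A\to\hat A[\ut]^\wedge$ is not essentially smooth (it is a completion, not a localization). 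Your flatness argument via the local criterion is correct but unnecessary once one has the power series presentations; what you are missing is the single structural observation that makes the whole lemma one line.
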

\begin{proof}
Choose a regular family of parameters $t_1\.t_n\in\cO_x$ and elements $s_1\.s_m$, which restrict to a regular family of parameters of the fiber $\cO_y/m_x\cO_y$ of $f$ at $y$. Then $(\ut,\us)$ is a regular family of parameters of $\cO_y$ and hence also of its completion, and we have that $\hatcO_x=k(x)\llbracket\ut\rrbracket$ and $\hatcO_x=k(y)\llbracket\ut,\us\rrbracket$, making the claim obvious.
\end{proof}

Note that the claim of the lemma holds without the assumption on the characteristic, but the argument is a bit more involved. The opposite implication holds if the completion homomorphisms are regular, e.g. the schemes are qe, but fails in general.

\subsubsection{The order}
The {\em order of $\cI$ at a point $x\in X$} is the maximal number $d$ such that $\cI_x\subseteq m_x^d$. By convention, $\ord(0)=\infty$. We define the global order by $\ord_X(\cI)=\max_{x\in X}\ord_x(\cI)$ and the {\em order function} $\ord_\cI:X\to\NN\cup\{\infty\}$ which sends $x$ to $\ord_x(\cI)$.

\begin{lem}\label{logordlem}
Let $x$ be a regular scheme, $\cI$ an ideal, and $x\in X$ a point with $\hatcO_x=k\llbracket t_1\.t_n\rrbracket$. Then

(i) $\ord_x(\cI)=\ord_x(\cI_x)=\ord_x(\hatcI_x)$ and this is the minimal number such that there exists $f\in\cI$ whose decomposition in $\hatcO_x$ contains a non-zero monomial $c\ut^\ud$ of degree $d=d_1+\dots+d_n$.

(ii) Functoriality: if $f\:X'\to X$ is a regular morphism, then $\ord_{\cI'}=\ord_\cI\circ f$.
\end{lem}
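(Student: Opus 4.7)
The plan is to prove (i) directly from faithful flatness of completion, and then derive (ii) by combining (i) with Lemma~\ref{completelem}. For (i), the equality $\ord_x(\cI) = \ord_x(\cI_x)$ is immediate, since the definition of order is already phrased at the stalk. For $\ord_x(\cI_x) = \ord_x(\hatcI_x)$, I would invoke faithful flatness of $\cO_x \to \hatcO_x$: we have $\hatcI_x = \cI_x \hatcO_x$ and $\hat m_x^d = m_x^d \hatcO_x$, and faithful flatness gives $A \subseteq B \Leftrightarrow A\hatcO_x \subseteq B\hatcO_x$ for ideals $A,B$ of $\cO_x$, so $\cI_x \subseteq m_x^d \Leftrightarrow \hatcI_x \subseteq \hat m_x^d$. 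For the monomial characterization, note that in $\hatcO_x = k\llbracket\ut\rrbracket$ the ideal $\hat m_x^d$ is precisely the set of power series whose every non-zero monomial has total degree $\geq d$; therefore $\ord_x(\hatcI_x) = \min_{g \in \hatcI_x}\ord_x(g)$. Since $\ord_x(g+h) \geq \min(\ord_x g, \ord_x h)$ and $\ord_x(ag) \geq \ord_x g$, this minimum is attained on any generating set of $\hatcI_x$, hence on elements of $\cI$ itself, giving the stated monomial characterization.

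For (ii), using (i) on both sides it suffices to verify $\ord_{x'}(\hatcI'_{x'}) = \ord_x(\hatcI_x)$ whenever $f(x') = x$. By Lemma~\ref{completelem} the induced map $\hatcO_x \to \hatcO_{x'}$ is regular and admits the explicit description $\hatcO_x = k(x)\llbracket\ut\rrbracket \hookrightarrow k(x')\llbracket\ut,\us\rrbracket = \hatcO_{x'}$. The pullback ideal $\hatcI'_{x'}$ is generated over $\hatcO_{x'}$ by the same elements that generate $\hatcI_x$ over $\hatcO_x$, so by the monomial characterization of (i) it is enough to check that for $g \in k(x)\llbracket\ut\rrbracket$ the lowest degree of a non-zero monomial is the same whether $g$ is regarded as a power series over $k(x)$ in the variables $\ut$ or over $k(x')$ in the variables $(\ut,\us)$. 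This is clear: the field extension $k(x) \to k(x')$ is injective so non-zero coefficients remain non-zero, and adjoining the extra variables $\us$ leaves the $\ut$-expansion of $g$ untouched.

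The step I expect to carry essentially all the weight is the second equality of (i), because it is the only point where one must pass between a noetherian local ring and its completion. Once this is set up, part (ii) is little more than unwinding the explicit form of $\hatcO_{x'}$ from \S\ref{regsec} and Lemma~\ref{completelem} together with the trivial observation that extending the coefficient field and adding formal variables cannot lower the degree of the lowest monomial of a power series.
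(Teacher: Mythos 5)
Your proposal is correct and is essentially the argument the paper leaves implicit: the paper declares (i) ``obvious'' and says (ii) ``easily follows from the formal local description of regular morphism in \S\ref{regsec} and Lemma~\ref{completelem}'', and you fill in exactly those details (faithful flatness of completion for the second equality in (i), the power-series model $k(x)\llbracket\ut\rrbracket\hookrightarrow k(x')\llbracket\ut,\us\rrbracket$ plus generator-tracking for (ii)).
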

\begin{proof}
The first claim is obvious and the second one easily follows from the formal local description of regular morphism in \S\ref{regsec} and Lemma~\ref{completelem}.
\end{proof}

\subsection{$\QQ$-ideals}\label{Qsec}

\subsubsection{The definition}
Informally speaking, a $\QQ$-ideal on a normal scheme $X$ is a rational power of an ideal. There are a few equivalent ways to formalize this, and we choose the following one: a {\em $\QQ$-ideal} is a formal rational power $[I]^a$, where $a\in\QQ_{\ge 0}$ and $[I]$ is the equivalence class of ideals that have the same integral closure. In particular, one can simply identify the equivalence class $[I]$ with the integral closure $I^\nor$. By definition $[I]^a=[J]^b$ if $[I^{an}]=[J^{bn}]$ for $n$ such that $an,bn\in \NN$. In particular, $[I]^{m/n}=[I^m]^{1/n}$ so one can always choose $a$ of the form $1/n$. Multiplication, summation and inclusion of ideals are compatible with integral closures, hence descend to the set of equivalence classes and then extend to $\QQ$-ideals as follows: $[I]^{1/n}+[J]^{1/m}=[I^m+J^n]^{1/mn}$, $[I]^{1/n}[J]^{1/m}=[I^mJ^n]^{1/mn}$ and $\cI\subseteq\cJ$ if and only if $\cI+\cJ=\cJ$.

Any ideal $I$ can be viewed as a $\QQ$-ideal $[I]$. In particular, we will use the notation $[t_1\.t_m]$ to denote the $\QQ$-ideal generated by $t_1\.t_m\in\cO_X$, and, more generally, use the notation $[t_1^{a_i}\.t_m^{a_m}]$ with $a_i\in\frac{1}{n}\NN$ to denote the ideal $[t_1^{na_1}\.t_m^{na_m}]^{1/n}$. Of course, this is consistent with the usual notations for ideals, namely, $[t_1^{a_i}\.t_m^{a_m}]$ is indeed the sum of ideals $[t_i^{a_i}]=[t_i]^{a_i}$. Conversely, any $\QQ$-ideal $\cI$ can be {\em rounded} to an ideal $(\cI)=\cI\cap\cO_X$, which is the maximal ordinary ideal contained in $\cJ$. Of course $(\cJ)$ is integrally closed. We say that a $\QQ$-ideal $I$ is an ordinary ideal if $I=[J]$ for an ideal $J$, and this happens if and only if $I=[(I)]$, that is, $I$ is generated by its rounding.

\begin{rem}
(i) In fact, $\QQ$-ideals just provide a formalization of objects widely used in resolution of singularities: Hironaka's idealistic exponents or marked ideals $(I,a)$. The operations on them match the operations on marked ideals. Also, such objects were considered in commutative algebra for completely different motivation.

(ii) It is easy to see that passing to the equivalence classes of ideals one obtains a cancellative monoid and the monoid of $\QQ$-ideals is its divisible hull.

(iii) One can also realize $\QQ$-ideals as certain ideals in the topology generated by blowings up and finite covers or just in the $h$-topology. In particular, passing to the $h$-topology identifies all ideals with the same integral closure and makes the monoid of ideals divisible, though there are $h$-ideals which are not $\QQ$-ideals. We will not use this approach, but it may serve as an additional source of intuition. In particular, it conceptually explains the meaning of operations on $\QQ$-ideals (just sum and product of $h$-ideals), and the rounding $(\cI)$ of a $\QQ$-ideal $\cI$ is just the pushforward of $\cI$ under the restriction of sites $X_h\to X_{\rm Zar}$.
\end{rem}

\subsubsection{Support of a $\QQ$-ideal}
Of course one can formalize the vanishing locus $V(\cJ)$ as a sort of $\QQ$-subscheme, and this is a good source of intuition, e.g. $V(\cJ)\into V(\cJ')$ if and only if $\cJ\subseteq\cJ'$, but we will only need the set-theoretic version. So we just set $V(\cJ)=|V(\cI)|$, where $\cI$ is any representative of any power $\cJ^N$, which is an ideal.

\subsubsection{Pullbacks}
If $X'\to X$ is a morphism with a normal source, then the equivalence is preserved by pulling back ideals, hence we can define the pullback of a $\QQ$-ideal $\cJ=[\cI]^a$ on $X$ to be $\cJ'=[\cI\cO_{X'}]^a$. Naturally, we will use the suggestive formal notation $\cJ'=\cJ\cO_{X'}$. Pullbacks are compatible with taking positive rational powers of $\QQ$-ideals.

\subsubsection{Valuation functions}
Let $\cI=[I]^d$ be a $\QQ$-ideal. We will use the suggestive notation $x^a\in\cI$ if $[x]^a\subseteq\cI$. This happens if and only if $x^{na}\in (I^{nd})^\nor$ for some, and then any, $n$ such that $na$ and $nd$ are integral. Now we define the {\em valuation function} induced by $\cI$ as follows: $\nu_\cI(f)$ is the supremum of the set of all numbers $a\in\QQ_{\ge 0}$ such that $f\in\cI^a$. In particular, $(\cI)$ is just the set of elements $f\in\cO_X$ of valuation at least $1$.

By convention, $\nu_\cI(f)=\infty$ if $f\in\cI^a$ for any $a$ and the set of all such elements $f$ is an ideal called the kernel of $\nu_\cI$. In fact, $\nu_\cI(f)=\infty$ if and only if $f$ vanishes on each connected component $X'\subseteq X$ such that $\cJ|_{X'}\neq\cO_{X'}$.

\subsubsection{Local valuation function}
By $\nu_{\cI,x}(f):=\nu_{\cI_x}(f)$ we denote the {\em local valuation} of $f$ at a point $x$. Clearly, $\nu_\cI(f)=\min_{x\in X}\nu_{\cI,x}(f)$.

\subsubsection{Properties of valuation functions}
The valuation function is power multiplicative in both arguments: $\nu_\cI(f^a)=a\nu_\cI(f)$ and $\nu_{\cI^a}=a^{-1}\nu_\cI$. Also, it satisfies the strong triangle inequality $\nu_\cI(a+b)\ge\min(\nu_\cI(a),\nu_\cI(b))$ and is submultiplicative: $\nu_\cI(ab)\ge\nu_\cI(a)+\nu_\cI(b)$. In particular, $\nu_\cI$ is a usual semivaluation (resp. a valuation) if the latter inequality is an equality (resp. and the kernel is trivial).

\subsection{Weighted centers}\label{tubesec}

\subsubsection{Centers and presentations}
By a {\em weighted center} or simply a {\em center} on a regular scheme $X$ we mean a $\QQ$-ideal $\cJ$ such that there exists a (non-strictly) increasing tuple $\ud=(d_1\.d_n)$ of positive rational numbers such that $X$ possesses a finite open cover $X=\cup_iX_i$ and each $\cJ_i=\cJ|_{X_i}$ admits a presentation of the form $\cJ_i=[t_1^{d_1}\.t_n^{d_n}]=[\ut^\ud]$, where $t_1\.t_n\in\cO_{X_i}$ is a family of regular parameters along $V(\cJ_i)$. In particular, the support $V(\cJ)$ of a center is a regular scheme. A presentation $\cJ=[\ut^\ud]$, if exists, is called a {\em (global) presentation} of $\cJ$.

By definition, the case of $n=0$ corresponds to the ideal of a union of connected components of $X$ (i.e. an ideal which is zero along its support). For functoriality reasons, the trivial ideal $\cO_X$ is also considered to be a center, which does not admit a presentation. We call it the {\em trivial center}.

\begin{rem}
(i) The definition of a weighted center is non-local in principle because it postulates existence of a global multiorder $\ud$ (whose uniqueness will be proved later). In particular, disjoint union of centers of different multiorders is not a center.

(ii) Centers are preserved by pullbacks under regular morphisms because partial families of parameters are preserved by such pulbacks.
\end{rem}

\subsubsection{Weighted valuation functions}
It will be very useful in the sequel to have an explicit formally-local description of $\nu_\cJ$.

\begin{lem}\label{vallem}
Let $X$ be a regular scheme, $\cJ=[\ut^\ud]$ a center with a global presentation, $\ud=(d_1\.d_n)$ and $w_i=d_i^{-1}$ for $1\le i\le n$. For a point $x\in V(\cJ)$ fix an isomorphism $C\llbracket t_1\.t_n\rrbracket=\hatcO_{X,x}$, where $C=k(x)\llbracket t_{n+1}\.t_m\rrbracket$. Then $\nu_{\cJ,x}$ is a valuation on $\cO_{X,x}$ which extends to the valuation $\nu_\hatcJ$ on $\hatcO_{X,x}$ and the latter can be described as follows: $$\nu_\hatcJ\left(\sum_{\ua\in\NN^n} c_\ua t_1^{a_1}\dots t_n^{a_n}\right)=\min_{\ua\in\NN^n|\ c_\ua\neq 0}\left(a_1w_1+\dots+a_nw_n\right).$$
\end{lem}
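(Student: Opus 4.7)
Denote by $\tilde\nu$ the function on $\hatcO_{X,x}=C\llbracket\ut\rrbracket$ given by the right-hand side of the displayed formula. The plan is to (i) show $\tilde\nu$ is a valuation, (ii) identify it with $\nu_\hatcJ$, and (iii) deduce the corresponding statement for $\nu_{\cJ,x}$ on $\cO_{X,x}$.

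For (i), I would filter $\hatcO_{X,x}$ by $F_r=\{f:\tilde\nu(f)\ge r\}$; the associated graded ring is the weighted polynomial ring $C[\tau_1,\dots,\tau_n]$ with $\deg\tau_i=w_i$. Since $C=k(x)\llbracket t_{n+1},\dots,t_m\rrbracket$ is a domain, so is this graded ring, and then the multiplicativity $\tilde\nu(fg)=\tilde\nu(f)+\tilde\nu(g)$ follows from the fact that the product of the bottom-weight homogeneous parts of $f$ and $g$ is a nonzero bottom-weight homogeneous part of $fg$.

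For (ii), fix $r=p/q\in\QQ_{>0}$ and choose an integer $M$ with $Md_i\in\NN$ for all $i$. Unwinding the conventions of \S\ref{Qsec}, $\cJ^r$ is represented by $[\ut^{M\ud}]^{p/Mq}$, so $f\in\cJ^r$ is equivalent to $f^{Mq}$ lying in the integral closure of the monomial ideal $(t_1^{Mpd_1},\dots,t_n^{Mpd_n})\subseteq\hatcO_{X,x}$. The key input is the Newton-polytope description of the integral closure of a monomial ideal in a regular local ring: a power series $g=\sum c_\ub\ut^\ub\in C\llbracket\ut\rrbracket$ lies in this integral closure if and only if every monomial with $c_\ub\ne 0$ satisfies $\sum_i b_i/(Mpd_i)\ge 1$, equivalently $\tilde\nu(g)\ge Mp$. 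Applied to $g=f^{Mq}$ and combined with the multiplicativity from (i), the condition $f\in\cJ^r$ becomes $\tilde\nu(f)\ge r$; taking the supremum over admissible $r$ yields $\nu_\hatcJ=\tilde\nu$.

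For (iii), $\cO_{X,x}\hookrightarrow\hatcO_{X,x}$ is faithfully flat, so $f\in\cJ_x^a$ implies $f\in\hatcJ^a$ and hence $\nu_{\cJ,x}\le\nu_\hatcJ|_{\cO_{X,x}}$. For the reverse direction I would use that the integral closures of the monomial ideals $(\ut^{Mp\ud})$ in the regular local ring $\cO_{X,x}$ are characterized by the same Newton-polytope condition applied to the formal expansion (equivalently, by the single Rees valuation $\tilde\nu$), so $(I\hatcO_{X,x})^\nor\cap\cO_{X,x}=I^\nor$ for these monomial ideals; then $\nu_{\cJ,x}$ is the restriction of $\nu_\hatcJ$ and in particular inherits the valuation property. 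The main technical point I expect to grind on is the Newton-polytope characterization of integral closures in $C\llbracket\ut\rrbracket$ where $C$ is itself a power-series ring rather than a field; this reduces to the classical field case because membership in a monomial ideal's integral closure is detected monomial-by-monomial in the $\ut$-expansion.
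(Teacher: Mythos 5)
Your proposal matches the paper's argument: both identify the displayed formula as a monomial (generalized Gauss) valuation on $\hatcO_{X,x}$, and both pin it down as $\nu_{\cJ,x}$ via the fact that its valuation ideals are exactly the integral closures of the monomial ideals $(\ut^{c\ud})$. The paper packages step (ii) as ``$\nu'$ is the minimal valuation with $\nu'(t_i)\ge w_i$'' rather than invoking the Newton-polytope description explicitly, and it passes silently over the descent of integral closure from $\hatcO_{X,x}$ to $\cO_{X,x}$ that you address in step (iii), but the substance is the same.
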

\begin{proof}
The formula on the right defines a function $\widehat{\nu}'$ on $C\llbracket\ut\rrbracket=\hatcO_x$, which is linear on monomials, and hence is easily seen to be a valuation (a so-called generalized Gauss valuation). Therefore its restriction $\nu'$ onto $\cO_x$ is also a valuation and we should prove that it coincides with $\nu_{\cJ,x}$. Since $\nu'$ is a valuation, for any $c\in\QQ_{\ge 0}$ the inequality $\nu'(f)\ge c$ defines an integrally closed ideal $\cI_c\subseteq\cO_x$. Moreover,  $\nu'$ is the minimal valuation such that $\nu'(t_i)\ge w_i$ for $1\le i\le n$, and hence $\cI_c=(\ut^{c\ud})^\nor=[\cJ^c]$ is the minimal integrally closed ideal containing $\ut^{c\ud}$ for any $c$ such that $c\ud\in\NN^n$. Unravelling the definition of $\nu_{\cJ,x}$ we obtain that it coincides with $\nu'$, completing the proof.
\end{proof}

\subsubsection{Multiorder}
Let $\cJ=[\ut^\ud]$ be a center. If $\cJ=\cO_X$ is trivial its order is set to be 0 and the multiorder is $(0)$. Otherwise, $d_1=\ord(\cJ)$ is called the {\em order} of $\cJ$ and the tuple $(\ud)=(d_1\.d_n,\infty)=\mord(\cJ)$ is called the {\em multiorder} of the presentation. In particular, $\mord(\cJ)=(\infty)$ if and only if $n=0$. Recall also that we always assume that $d_1\le d_2\le\dots\le d_n$. Of course, the multiorder depends only on $\cJ$ so the notation is unambiguous, but we postpone the proof until Corollary~\ref{invarcenter}. Thus the set of possible multiorders consists of $(0)$ and all increasing tuples in $\coprod_{n\ge 0}(\QQ_{>0}^n\times\{\infty\})$. We order this set lexicographically and denote it $\cQ$.

\begin{rem}
An alternative is to consider tuples without the infinity and order them lexicographically with the convention that an initial subtuple of a tuple is larger. This is the convention chosen in \cite{ATW-weighted}. We will usually use this convention in the notation and omit $\infty$ when this cannot cause to confusions.
\end{rem}

\subsubsection{The invariant}
The {\em invariant} $\inv(\cJ)=(\ud,V(\cJ))$ of $\cJ$ is combined from the multiorder and the support. We partially order the set of invariants lexicographically: $(\ud',V(\cJ'))\le(\ud,V(\cJ))$ if either $\ud'<\ud$ or $\ud'=\ud$ and $V(\cJ')\subseteq V(\cJ)$. Here is a first application of this notion.

\begin{lem}\label{subcenter}
Let $X$ be a regular scheme with nested weighted centers $\cJ\subseteq\cJ'$ which locally possess presentations of the same multiorder $\ud$. Then $\cJ'=\cJ$ if and only if $V(\cJ)=V(\cJ')$.
\end{lem}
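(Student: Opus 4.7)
The forward direction ($\cJ'=\cJ\Rightarrow V(\cJ)=V(\cJ')$) is immediate. The plan for the converse is to reduce to a complete regular local ring of dimension $n$, where a length/Newton-polytope calculation finishes the job. First I would work locally at a point $x\in V(\cJ)$ and pick presentations $\cJ_x=[\ut^\ud]$, $\cJ'_x=[\us^\ud]$. Since $V(\ut)$ and $V(\us)$ are reduced regular subschemes of $\Spec(\cO_{X,x})$ of the same pure codimension $n$ with the same underlying set, they agree as subschemes, so $(\ut)=(\us)=:\mathfrak p$. It suffices to show $(\cJ_x^N)=((\cJ'_x)^N)$ for some $N\in\NN$ with $N\ud\in\NN^n$ (taking $N$-th $\QQ$-roots then yields $\cJ_x=\cJ'_x$), and by faithful flatness of the completion we may work inside $\hatcO_x$.

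By the proof of Lemma~\ref{vallem}, $(\cJ^N)=(t_1^{Nd_1},\ldots,t_n^{Nd_n})^{\nor}$ and $((\cJ')^N)=(s_1^{Nd_1},\ldots,s_n^{Nd_n})^{\nor}$: monomial integrally closed ideals with common radical $\mathfrak p$. A direct check (the quotient is a finite free module over $C=k(x)\llbracket t_{n+1},\ldots,t_m\rrbracket$, and associated primes of a free module over a domain are trivial) shows each is $\mathfrak p$-primary. The finite module $M:=((\cJ')^N)/(\cJ^N)$ therefore satisfies $\mathrm{Ass}(M)\subseteq\{\mathfrak p\}$, so $M=0$ if and only if $M_\mathfrak p=0$. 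After localizing at $\mathfrak p$ and passing to the completion, we land in a complete regular local ring $\hat R$ of dimension $n$ in which both $\ut$ and $\us$ are full regular systems of parameters.

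The key step is this dim-$n$ case. Both $(\cJ^N)$ and $((\cJ')^N)$ become $\mathfrak m$-primary integrally closed monomial ideals whose Newton polytope is the simplex with vertices $\{Nd_i e_i\}$ (in the respective coordinates $\ut$, $\us$). Hence $\ell(\hat R/(\cJ^N))$ equals the number of $\ua\in\NN^n$ satisfying $\sum_i a_i/(Nd_i)<1$, a combinatorial quantity depending only on $\ud$ and $N$, and the same count gives $\ell(\hat R/((\cJ')^N))$. Equal lengths together with the inclusion force equality in $\hat R$ (equivalently, both ideals have the same Hilbert--Samuel multiplicity, and Rees's theorem applies since both are integrally closed). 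Propagating this back through the localization (by the $\mathfrak p$-primary analysis) and through the completion (by faithful flatness) gives $(\cJ_x^N)=((\cJ'_x)^N)$, hence $\cJ_x=\cJ'_x$. The principal obstacle is precisely this dim-$n$ length/multiplicity invariance under the coordinate change $\ut\mapsto\us$; the remaining steps are standard commutative-algebra reductions.
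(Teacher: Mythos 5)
Your proof is correct but follows a genuinely different route from the paper's one-paragraph argument. After reducing to ordinary integrally closed ideals by raising to a power, the paper simply observes that the complete local ring $\hatcO_x/\hatcJ_x$ depends, up to abstract isomorphism, only on the multiorder $\ud$, the residue field $k(x)$ and $\dim_x(X)$; hence $A=\cO_x/\cJ_x$ and $A'=\cO_x/\cJ'_x$ have the same Hilbert--Samuel function, and since the inclusion $\cJ_x\subseteq\cJ'_x$ gives a surjection $A\onto A'$, equality of Hilbert--Samuel functions plus Krull's intersection theorem forces $A=A'$. You instead reduce to a finite-length count: first you identify the radicals $(\ut)=(\us)=\mathfrak p$ (this is the only place $V(\cJ)=V(\cJ')$ enters, and it is used implicitly in the paper too), then show the powers are $\mathfrak p$-primary, then localize at $\mathfrak p$ and complete, and finally count lattice points under a Newton simplex. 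The two arguments are cognate --- both are Hilbert--Samuel comparisons --- but yours is more explicit at the cost of a longer chain of reductions (commutation of integral closure with localization and with completion, the $\mathfrak p$-primary check via associated primes of a free $C$-module), each of which deserves more justification than ``a direct check,'' though none is actually problematic. The paper's version avoids the primary reduction entirely by never leaving the maximal ideal of $\cO_x$. Finally, the parenthetical ``equivalently \dots\ Rees's theorem applies'' is a red herring: once the ideals are nested, $\mathfrak m$-primary and of equal finite colength, they coincide simply because a length-zero module is zero; Rees's theorem (which recovers equality from equal multiplicities, a weaker hypothesis than equal colengths) is a heavier tool and is not needed.
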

\begin{proof}
Raising $\cJ$ and $\cJ'$ to a large enough power we can assume that the multiorders are natural and hence both are ordinary integrally closed ideals. It suffices to prove that if $x\in V(\cJ)\cap V(\cJ')$, then $\cJ_x=\cJ'_x$. Locally formally the isomorphism class of a center at $x$ depends only on the multiorder of a presentation, the residue field $k(x)$ and the dimension $\dim_x(X)$. This implies that the HS functions of $A=\cO_x/\cJ_x$ and $A'=\cO_x/\cJ'_x$ coincide, and hence the surjection $A\onto A'$ is an equality.
\end{proof}

\subsubsection{Maximal contacts to a center}
Now, we will show how to construct presentations inductively. This can be viewed as a maximal contact theory for centers. In a sense, the situation is the best possible (and natural) one: one can start with picking a coordinate $t=t_1$ of maximal possible valuation function, then the restriction $\cJ_H=\cJ|_H$ onto $H=V(t)$ is a center, and one can simply combine $t_1$ with any lift of a presentation of $\cJ_H$ to $X$.

\begin{theor}\label{centerindlem}
Let $\cJ$ be a center on $X$ of order $d$ and assume that $H=V(t)$ is a regular subscheme containing $V(\cJ)$. Then

(i) $\nu_\cJ(t)\le w=d^{-1}$ and the equality holds if and only if locally at any point $x\in V(\cJ)$ there exists a family $t_1=t,t_2\.t_n$ of parameters along $V(\cJ)$ which gives rise to a presentation $\cJ=[\ut^\ud]$.

(ii) If $\nu_\cJ(t)=w$, then $\cJ_H=\cJ|_H$ is a center on $H$, and a family $t_1=t,t_2\.t_n$ of parameters along $V(\cJ)$ gives rise to a presentation $\cJ=[\ut^\ud]$ if and only if the restrictions $\ot_i=t_i\cO_H$ give rise to a presentation $\cJ_H=[\ot_2^{d_2}\.\ot_n^{d_n}]$.
\end{theor}
\begin{proof}
All claims can be checked locally at a point $x\in V(\cJ)$, so we can assume that there exits a presentation $\cJ=[\ut^\ud]$, where $d_1=d$.

(i) Since $t$ is a parameter at $x$, its formal decomposition with respect to $\ut$ involves a monomial of order one, and then Lemma~\ref{vallem} implies that $\nu_{\cJ,x}(t)\le\max_i(d_i^{-1})=w$. If $\cJ=[\ut^\ud]$ with $t=t_1$, then $d=\ord(\cJ)=d_1$ and $\nu_\cJ(t)=w$. Conversely, assume that $\nu_\cJ(t)=w$. Then Lemma~\ref{vallem} implies that the formal decomposition of $t$ with respect to a full system of regular parameters $t_1\.t_m$ extending $\ut$ involves a non-zero linear term $at_i$ with $d_i=d$. Switching $t_1$ and $t_i$ we can assume that $i=1$ and then $t,t_2\.t_n$ is another system of regular parameters. It follows that we have an inclusion of centers $[t^d,t^{d_2}_2\.t^{d_{n}}_{n}]\subseteq\cJ$, which has to be an equality by Lemma~\ref{subcenter}.

(ii) By (i) locally at $x$ there exists a presentation $\cJ=[\ut^\ud]$ and then $\cJ_H=[\ot^{d_2}_2\.\ot^{d_n}_n]$ is a center on $H$. It remains to prove the lifting claim. So assume that $\ut'=(t,t'_2\.t'_n)$ is a family of parameters along $V(\cJ)$ such that $\cJ_H=[\ot'^{d_2}_2\.\ot'^{d_n}_n]$. It suffices to check that $\nu_\cJ(t'_i)\ge w_i=d_i^{-1}$ for $2\le i\le n$, because then $t'^{d_i}_i\in\cJ$ and hence $[\ut'^\ud]=\cJ$ by another use of Lemma~\ref{subcenter}.

By Lemma~\ref{vallem} $\nu_{\cJ,x}(t'_i)$ is computed from the presentation $t'_i=\sum_{\ua}c_{\ua}\ut^\ua$ as the minimum of the values of $a_1w_1+\dots +a_nw_n$ on non-zero monomials of the presentation. Reducing this modulo $t$ and applying the same lemma to the obtained presentation of $\ot'_i$ we obtain that $w_i=\nu_{\cJ_H,x}(\ot'_i)$ is the minimum of the expressions $a_2w_2+\dots +a_nw_n$ on the monomials which do not vanish on $H$ and hence do not involve $t_1$. Combining this with the case when $a_1>0$ and hence $a_1w_1+\dots+a_nw_n\ge w_1\ge w_i$, we obtain that $\nu_\cJ(t'_i)\ge w_i$, as required.
\end{proof}

Any $t$ and $H(t)$ as in the lemma are called {\em maximal contact} parameter and hypersurface of $\cJ$. They exist locally whenever $0<\ord(\cJ)<\infty$.

\begin{cor}\label{invarcenter}
Let $X$ be a regular scheme with a weighted center $\cJ=[t_1^{d_1}\.t_n^{d_n}]$.

(i) If $\cJ\subseteq\cJ'=[t'^{d'_1}_1\.t'^{d'_{n'}}_{n'}]$ is inclusion of centers, then $\ud'\le\ud$ and $V(\cJ')\subseteq V(\cJ)$, and both equalities hold if and only if $\cJ=\cJ'$. In particular, $\mord(\cJ)$ and $\inv(\cJ)$ are independent of the choice of parameters and hence depend only on $\cJ$.

(ii) Assume that $x\in V(\cJ)$ and $\ut'$ is another regular family of parameters at $x$. Then the following conditions are equivalent: (a) $\ut'$ gives rise to another presentation of $\cJ$ at $x$, and hence $\cJ_x=[\ut'^\ud]$, (b) $t'^{d_i}_i\in\cJ_x$ for $1\le i\le n$, (c) $\nu_{\cJ,x}(t'_i)\ge 1/d_i$ for $1\le i\le n$.
\end{cor}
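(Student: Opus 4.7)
For part (i), the support inclusion $V(\cJ')\subseteq V(\cJ)$ is immediate from $\cJ\subseteq\cJ'$. For the multiorder comparison $\ud'\le\ud$, I would induct on $n$, the length of the tuple $\ud$. Base cases (one of the centers trivial, or $n=0$) are handled directly using the conventions $\mord(\cO_X)=(0)$ and $\mord(\text{ideal of a union of components})=(\infty)$. So assume both centers are nontrivial with $n\ge 1$. Comparing orders gives $d_1=\ord(\cJ)\ge\ord(\cJ')=d'_1$; if strict then already $\ud'<\ud$ in the first coordinate, so set $d:=d_1=d'_1$.

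The crucial step is to use $t=t_1$, the first parameter of the given presentation of $\cJ$, as a common maximal contact. We have $V(t)\supseteq V(\cJ)\supseteq V(\cJ')$, so Theorem~\ref{centerindlem}(i) applied to $\cJ'$ gives $\nu_{\cJ'}(t)\le 1/d$. Combined with $\nu_\cJ(t)=1/d$ and the monotonicity $\nu_\cJ\le\nu_{\cJ'}$ (a direct consequence of $\cJ\subseteq\cJ'$ and the definition of $\nu$), this forces $\nu_{\cJ'}(t)=1/d$, so $t$ is also a maximal contact for $\cJ'$. Theorem~\ref{centerindlem}(ii) then produces an inclusion of centers $\cJ|_H\subseteq\cJ'|_H$ on $H=V(t)$ with multiorders $(d_2\.d_n)$ and $(d'_2\.d'_{n'})$, and the inductive hypothesis on $H$ gives $(d'_2\.d'_{n'})\le(d_2\.d_n)$. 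Together with $d_1=d'_1$ this yields $\ud'\le\ud$. The equality clause, that $\ud=\ud'$ and $V(\cJ)=V(\cJ')$ imply $\cJ=\cJ'$, is exactly Lemma~\ref{subcenter}; the ``in particular'' statements follow by applying the comparison to two presentations of the same $\cJ$ in both directions.

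For part (ii), the equivalence (b)$\Leftrightarrow$(c) is tautological: by power-multiplicativity of $\nu_{\cJ,x}$, the condition $t'^{d_i}_i\in\cJ_x$ (i.e.\ $\nu_{\cJ,x}(t'^{d_i}_i)\ge 1$) is the same as $\nu_{\cJ,x}(t'_i)\ge 1/d_i$. The implication (a)$\Rightarrow$(b) is obvious from the presentation. For (b)$\Rightarrow$(a), the hypothesis yields the inclusion of $\QQ$-ideals $[\ut'^\ud]\subseteq\cJ_x$. Since $\ut'$ is a partial regular family of parameters at $x$, the left-hand side is itself a center of multiorder $\ud$, matching that of $\cJ_x$. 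By Lemma~\ref{subcenter} it then suffices to verify equality of supports near $x$; but $V([\ut'^\ud])$ and $V(\cJ)$ are both regular of codimension $n$ through $x$ and hence locally irreducible at $x$, so the containment $V(\cJ)\subseteq V([\ut'^\ud])$ forces them to coincide in a neighborhood.

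The main obstacle is the maximal contact step in part (i): one must carefully track that $\cJ\subseteq\cJ'$ implies $\nu_\cJ\le\nu_{\cJ'}$ (not the reverse) and then combine this with the upper bound of Theorem~\ref{centerindlem}(i) applied to $\cJ'$ to pin down $\nu_{\cJ'}(t)=1/d'_1$. Once this common maximal contact is available, the rest is routine bookkeeping with the lex order on $\cQ$ and applications of Lemma~\ref{subcenter}.
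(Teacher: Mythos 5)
Your proof is correct and follows essentially the same route as the paper's: induction on the length of the tuple, reduction of $d_1=d_1'$ via comparison of orders, identification of $t=t_1$ as a common maximal contact for both centers via the bound in Theorem~\ref{centerindlem}(i) (the paper phrases the lower bound as $t^d\in\cJ\subseteq\cJ'$ rather than via monotonicity of $\nu$, but these are the same observation), restriction to $H=V(t)$ via Theorem~\ref{centerindlem}(ii), and Lemma~\ref{subcenter} for the equality clause; part (ii) is handled identically by reducing to (i). The only substantive difference is that you spell out the support-equality step in (ii) via irreducibility of regular local subschemes of equal codimension, which the paper leaves implicit.
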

\begin{proof}
Raising $\cJ$ and $\cJ'$ to a large enough power we can assume that the multiorders are natural and hence both are ordinary integrally closed ideals. The second claim of (i) was already proved in Lemma~\ref{subcenter}. The inclusion of supports is obvious, so we should prove that $\ud'\le\ud$. We will use induction on $n'$ with the case $n'=0$ being trivial. We can work locally at a point $x\in V(\cJ')$. If $d'_1<d_1$ we are done, so assume that $d'_1\ge d_1$. Then the equality holds because $d'_1=\ord(\cJ')\le\ord(\cJ)=d_1$. Set $d=d_1$ and $t=t_1$ for shortness. Since $t^d\in\cJ\subseteq\cJ'$ we have that $\nu_{\cJ'}(t)\ge 1/d$ and then by Theorem~\ref{centerindlem}(i) we can choose another presentation of $\cJ'$ so that $t=t'$. Furthermore, if $H=V(t_1)$, then by Theorem~\ref{centerindlem}(ii) $\cJ|_H\subseteq\cJ'|_H$ are centers with presentations of multiorders obtained by omitting the first entry. So, $(d_2\.d_n)\le(d'_2\.d'_{n'})$ by the induction assumption, and hence $\ud\le\ud'$.

The equivalence of (b) and (c) in (ii) follows from the power multiplicativity of $\nu_{\cJ,x}$. Condition (b) means that $\cJ'=[\ut'^\ud]$ is contained in $\cJ_x$, and by the second claim of (i) this happens if and only if (a) holds.
\end{proof}

\subsubsection{Differential calculus of centers}
We finish this section with a reinterpretation of the maximal contact theory in terms of derivations. This provides a relation to the classical theory of maximal contact and will be essential in order to prove existence of the canonical center of an ideal.

We will recall the definition of derivations of ideals in \S\ref{notationsec} below, but it seems more natural to include the following lemma in this section. Also, we will only derive roundings of the centers, but for simplicity we use the notation $\cD^{(\le i)}_X(\cJ)$ instead of $\cD^{(\le i)}_X((\cJ))$. Similarly, the {\em coefficients ideal} $\cC_X(\cJ)=\sum_{i=0}^{d-1}(\cD_X^{(\le i)}(\cJ))^{\frac{d!}{d-i}}$ depends only on the rounding.

\begin{lem}\label{diffcenter}
Let $\cJ$ be a center of an integral order $d$, then

(i) If $t$ is a parameter along $V(\cJ)$ and $t\in\cD_X^{(\le d-1)}(\cJ)$, then $t$ is a maximal contact to $\cJ$.

(ii) $\cD_X^{(\le i)}(\cJ)\subseteq\cJ^{\frac{d-i}d}$ for $0\le i\le d-1$, and $\cC_X(\cJ)\subseteq\cJ^{(d-1)!}$.

(iii) Assume that $t$ is a maximal contact to $\cJ$, $H=V(t$), $f\in\cO_X$ and $\partial\in\cD_X$ is a derivation such that $\partial(t)\in\cO_X^\times$. Then $f\in\cJ$ if and only if $\partial^i(f)|_H\in\cJ^{\frac{d-i}d}|_H$ whenever $0\le i\le d-1$.
\end{lem}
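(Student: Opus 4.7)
The strategy is to prove (ii) first by a direct Gauss-valuation computation, to deduce (i) immediately, and to establish (iii) by Taylor-expanding $f$ along a formally straightened $\partial$.

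For (ii), work locally at a point $x\in V(\cJ)$ and pass to $\hatcO_x=k(x)\llbracket t_1\.t_m\rrbracket$ with $\cJ=[t_1^{d_1}\.t_n^{d_n}]$ and $d=d_1\le d_j$. By Lemma~\ref{vallem}, $\nu_\hatcJ$ is the Gauss valuation assigning a monomial $\prod t_j^{a_j}$ the weight $\sum_{j\le n}a_j/d_j$. Any $\partial\in\cD_X^{(\le i)}$ extends to a formal differential operator of the same order, expressible as a polynomial of degree $\le i$ in $\partial/\partial t_1\.\partial/\partial t_m$ with coefficients in $\hatcO_x$. Each factor $\partial/\partial t_j$ lowers the weight of a monomial by at most $1/d_j\le 1/d$, so $i$-fold iteration gives $\nu_\hatcJ(\partial f)\ge\nu_\hatcJ(f)-i/d$. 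Applied to $f\in(\cJ)$, this yields $\cD_X^{(\le i)}(\cJ)\subseteq\cJ^{(d-i)/d}$. Raising to the $d!/(d-i)$-th power gives $(\cD_X^{(\le i)}(\cJ))^{d!/(d-i)}\subseteq\cJ^{(d-1)!}$ for each $i$, whence $\cC_X(\cJ)\subseteq\cJ^{(d-1)!}$. Part (i) is then immediate: $t\in\cD_X^{(\le d-1)}(\cJ)\subseteq\cJ^{1/d}$ forces $\nu_\cJ(t)\ge 1/d$, while Theorem~\ref{centerindlem}(i) gives $\nu_\cJ(t)\le 1/d$ since $t$ is a parameter along $V(\cJ)$, so equality holds and $t$ is a maximal contact parameter.

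For (iii), the ``only if'' direction is immediate from (ii) by restricting to $H$. For ``if'', we check $f\in\cJ_x$ formally at each $x\in V(\cJ)$. Since $\partial(t)=u$ is a unit and we are in characteristic zero, $\partial$ can be straightened in $\hatcO_x$: solving $\partial(s_j)=0$ with $s_j|_H=t_j$ for $j\ge 2$ admits a unique formal solution (expanding $s_j=\sum_k s_j^{(k)}t^k$ with $s_j^{(0)}=t_j$, the higher coefficients are recursively determined because $u$ is invertible), yielding a regular parameter system $(t,s_2\.s_m)$ of $\hatcO_x$ in which $\partial=u\,\partial/\partial t$. Taylor-expand $f=\sum_{k\ge 0}f_k t^k$ with $f_k\in k(x)\llbracket s_2\.s_m\rrbracket=\hatcO_{H,x}$. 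A straightforward induction using $\partial=u\,\partial/\partial t$ yields
\[
\partial^i(f)|_H=i!\,(u|_H)^i f_i+\sum_{j<i}c_{ij}f_j,\qquad c_{ij}\in\hatcO_{H,x}.
\]
Since $(u|_H)^i$ is invertible and $\cJ^{(d-j)/d}|_H\subseteq\cJ^{(d-i)/d}|_H$ for $j<i$, the hypothesis forces $f_i\in\cJ^{(d-i)/d}|_H$ for $0\le i\le d-1$ by induction; for $i\ge d$ the containment is automatic since the exponent is $\le 0$. Applying Lemma~\ref{vallem} to $\hatcJ$ we conclude $\nu_\hatcJ(f)=\min_k(\nu_{\cJ|_H}(f_k)+k/d)\ge 1$, so $f\in\hatcJ_x\cap\cO_{X,x}=\cJ_x$.

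The main obstacle is the formal straightening of $\partial$: one must verify in characteristic zero that the recursive solution of $\partial(s_j)=0$ with prescribed restriction to $H$ does give a regular parameter system, and that the resulting relation between $\{\partial^i(f)|_H\}$ and $\{f_i\}$ is upper-triangular with unit diagonal, so that the inductive extraction of the $f_i$ runs through cleanly.
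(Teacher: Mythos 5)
Your proof is correct and follows essentially the same route as the paper: (ii) by a Gauss-valuation estimate on monomials using Lemma~\ref{vallem}, (i) as an immediate corollary via Theorem~\ref{centerindlem}(i), and (iii) by splitting the completion along $t$ into $\Ker(\partial)\llbracket t\rrbracket$ and reading off the Taylor coefficients. The paper packages the straightening step you carry out by hand (recursively solving $\partial(s_j)=0$ with $s_j|_H=t_j$) as Lemma~\ref{taylor} (the Nagata--Zariski--Lipman splitting), works in the $(t)$-adic rather than the full $\mathfrak m_x$-adic completion, and normalizes $\partial(t)=1$ up front, which makes $\partial^i(f)|_H=i!\,\overline f_i$ exact and removes the upper-triangular bookkeeping you need for general unit $u$; you compensate by noting the lower-order corrections land in a larger power of $\cJ|_H$, so the induction still closes. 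One point you should make explicit rather than leave implicit: after straightening you appeal to the formula $\nu_{\hatcJ}(f)=\min_k(\nu_{\cJ|_H}(f_k)+k/d)$, which presupposes that $\cJ$ admits a presentation $[t^{d},s_2^{d_2}\.s_n^{d_n}]$ in the new coordinates; this is exactly Theorem~\ref{centerindlem}(ii) applied to the lift $(t,s_2\.s_n)$ of a presentation of $\cJ|_H$, and the paper invokes it explicitly.
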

\begin{proof}
We can work locally at a point $x\in V(\cJ)$. In particular, we can assume that $\cJ=[\ut^\ud]$ with $d_1=d$. It follows from Lemma~\ref{vallem} that for any $f\in\hatcO_x$ one has that $\nu_\hatcJ(\partial(f))\ge\nu_\hatcJ(f)-w$, where $w=d^{-1}$. Indeed, the lemma implies that it suffices to check this for a monomial $m$, and since for any derivation $\nabla$ the element $\nabla(m)$ is of the form $\sum_i a_im/t_i$ we obtain that $$\nu_\hatcJ(\nabla(m))\ge\nu_\hatcJ(m)-\max_i(\nu_\hatcJ(t_i))=\nu_\hatcJ(m)-w.$$

By induction on $i$ we now obtain that $\nu_\hatcJ(f)\ge \frac{d-i}d$ for any $f\in\cD_X^{(\le i)}(\cJ)$ yielding the first claim of (ii), and the second claim follows immediately via the formula for $\cC_X(\cJ)$. In addition, if $t$ is as in (i), then $\nu_\hatcJ(t)\ge w$ and hence $t$ is a maximal contact by Theorem~\ref{centerindlem}.

Finally, let us prove (iii). The direct implication follows from (ii), so let us prove the inverse one. Let $\hatcO_{X,x}$ be the $(t)$-adic completion. Note that $\partial$ extends to (any) formal completion by continuity, and by a classical result, that we recall in Lemma~\ref{taylor} below, $\Ker(\partial)=\cO_{H,}$ yielding an isomorphism $\hatcO_{X,x}=\cO_{H,x}\llbracket t\rrbracket$. In particular, for any $f\in\hatcO_{X,x}$ we view the restriction $\of\in\hatcO_{H,x}$ as an element of $\Ker(\partial)$ or as the free term $\of_0$ of the Taylor expansion $f=\sum_{i=0}^\infty\of_it^i$. By Theorem~\ref{centerindlem}, $\hatcJ=\cJ\hatcO_{X,x}$ possesses a presentation $[\ut^\ud]$ with $t_1=t$ and $t_2\.t_n\in\Ker(\partial)$, hence $\cJ_H:=\cJ|_H$ lies in $\cJ$ once one identifies $\cO_{H,x}$ with $\Ker(\partial)$. Now, $i!\of_i=\partial^i(f)|_H\in\cJ_H^{\frac{d-i}d}\subset\cJ^{\frac{d-i}d}$ for any $i<d$. In addition, $t^i\in\cJ^{\frac{i}d}$ because $t$ is a maximal contact, and hence $f\in\cJ$.
\end{proof}

We have used above the following splitting lemma (which is the induction step in a splitting theorem of Nagata-Zariski-Lipman, see \cite[Theorem~30.1]{Matsumura-ringtheory}).

\begin{lem}\label{taylor}
Let $A$ be a ring of characteristic zero, $t\in A$ an element such that $A$ is $t$-adically complete and $\oA=A/tA$. If $\partial\:A\to A$ is a derivation such that $\partial(t)$ is a unit, then $\Ker(\partial)\toisom\oA$ and this gives rise to an isomorphism $\oA\llbracket t\rrbracket\toisom A$. In particular, $\oA\llbracket t\rrbracket\toisom A$ if and only if there exists a derivation $\partial\:A\to A$ such that $\partial(t)$ is a unit.
\end{lem}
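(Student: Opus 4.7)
My plan is to reduce at once to the case $\partial(t)=1$ by replacing $\partial$ with $\partial(t)^{-1}\partial$, which is again a derivation since scaling a derivation by any function preserves the Leibniz rule. The main task is then to establish the isomorphism $\Ker(\partial)\toisom\oA$; the power series decomposition $\oA\llbracket t\rrbracket\toisom A$ will follow from this by a short inductive argument using $t$-adic completeness, and the converse direction of the lemma is trivial by taking the formal derivation $\partial/\partial t$ on $\oA\llbracket t\rrbracket$.

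For injectivity of $\Ker(\partial)\to\oA$, I would take $c\in\Ker(\partial)\cap tA$ and show by induction on $n\ge 1$ that $c\in t^nA$. Writing $c=t^nd$ and expanding $0=\partial(c)=nt^{n-1}d+t^n\partial(d)$, multiplication by $t$ gives $nc=-t^{n+1}\partial(d)$, so $c\in t^{n+1}A$ provided $n$ is invertible in $A$. This is the first use of characteristic zero; $t$-adic separatedness then yields $c\in\bigcap_n t^nA=0$.

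For surjectivity, given a lift $a_0\in A$ of $\overline{a}\in\oA$, the natural candidate is the Taylor-type sum
$$a:=\sum_{i\ge 0}\frac{(-t)^i}{i!}\partial^i(a_0),$$
which converges $t$-adically because the $i$-th summand lies in $t^iA$. This is the second use of characteristic zero (invertibility of $i!$ in $A$). Applying $\partial$ termwise and using $\partial(t)=1$ together with Leibniz, the derivative splits as a sum of two series which cancel after an index shift, showing $a\in\Ker(\partial)$; meanwhile $a\equiv a_0\pmod{t}$ is visible from the $i=0$ term alone.

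Once $\Ker(\partial)\toisom\oA$ is established, the decomposition $A\cong\oA\llbracket t\rrbracket$ is routine: the $n=1$ case of the injectivity argument shows that $t$ is a non-zero-divisor on $A$, and then for any $a\in A$ one recursively selects the unique $a_i\in\Ker(\partial)$ lifting the class of $(a-\sum_{j<i}a_jt^j)/t^i$ modulo $t$, so that $\sum a_it^i$ converges to $a$ by $t$-adic completeness and is unique by the already established injectivity. I do not expect a genuine obstacle; the main hazard is to keep track of the two distinct places where characteristic zero enters and to verify carefully that the Taylor sum actually lies in $\Ker(\partial)$.
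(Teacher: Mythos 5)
Your proof is correct and follows essentially the same route as the paper: reduce to $\partial(t)=1$, observe $\Ker(\partial)\cap tA=0$ by iterated differentiation and invertibility of integers, and use the Taylor-type series $\sum_{i\ge0}\tfrac{(-1)^i}{i!}\,t^i\partial^i(a_0)$ for surjectivity of $\Ker(\partial)\to\oA$. The only slip is the remark that ``the $n=1$ case of the injectivity argument shows that $t$ is a non-zero-divisor on $A$'': that step concerns $c\in\Ker(\partial)\cap tA$ and shows $c\in t^2A$, which is not the non-zero-divisor statement, since the latter is about arbitrary $c$ with $tc=0$ (not necessarily killed by $\partial$). A parallel, equally short computation does give it: if $tc=0$, then $0=\partial^k(tc)=t\partial^k(c)+k\,\partial^{k-1}(c)$, so $\partial^{k-1}(c)=-\tfrac1k\,t\,\partial^k(c)$ and hence $c=\tfrac{(-1)^k}{k!}t^k\partial^k(c)\in t^kA$ for all $k$, whence $c=0$. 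Alternatively, you can sidestep the non-zero-divisor claim altogether: at each recursive step choose any preimage of $a-\sum_{j<i}a_jt^j$ under multiplication by $t^i$ (one exists since this element lies in $t^iA$), and prove uniqueness of the resulting expansion from $\partial^n\bigl(\sum b_it^i\bigr)\equiv n!\,b_n\pmod{tA}$, which again reduces to $\Ker(\partial)\cap tA=0$.
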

\begin{proof}
Set $B=\Ker(\partial)$. Replacing $\partial$ by $\partial(t)^{-1}\partial$ we can assume that $\partial(t)=1$. The homomorphism $B\llbracket t\rrbracket\to A$ is injective because if $\partial(b)=0$ for $b=\sum_{i=0}^n b_it^i\in B\llbracket t\rrbracket$, then $\sum_{i=1}^\infty ib_it^{i-1}$ vanishes, and hence $b\in B$. Thus, it suffices to prove that the composed homomorphism $B\into A\onto\oA$ is surjective. This follows from the observation that for any $a\in A$ the element $\sum_{i=0}^\infty \frac{(-1)^i}{i!}t^i\partial^i(a)$ lies in $B$ and has the same image in $\oA$ as $a$.
\end{proof}

\subsection{Blowings up of centers}\label{blowsec}
We will only need normalized constructions, so we mainly discuss them. We do not prove anything new in this subsection, and the construction of weighted blowings up was described, for example, in \cite[\S3]{ATW-weighted}, while a thorough study of the subject can be found in \cite[\S3]{Quek-Rydh}. So we choose the fastest root, though with a small change of accents and notation/terminology.

\subsubsection{Normalized root stacks}
Root stacks were introduced in \cite[\S2]{Cadman} and \cite[Appendix B]{AGV}. If $X$ is a normal Artin stack with a Cartier divisor $D$, then the {\em root stack} $X'=X[\sqrt[n]{D}]$ is the fiber product $X\times_\cA\cA$, where $\cA=[\bbA^1/\GG_m]$, the morphism $X\to\cA$ is the universal morphism induced by $D$ (so that $D$ is the pullback of the universal Cartier divisor at the origin of $\cA$) and the morphism $\cA\to\cA$ is the $n$-th power morphism. By $\sqrt[n]{D}$ we denote the pullback of the origin under the projection $X'\to\cA$ (which is the base change of $X\to\cA$), so it is  an $n$-th root of $D$. In general, the morphism $X'=X[\sqrt[n]{D}]\to X$ is non-representable even when $X$ is a scheme, and $(X',D')$ is the universal $X$-stack with a fixed $n$-th root of $D$: if $g\:Y\to X$ is another morphism and $D_Y$ is such that $D_Y^n=g^*(D)$ then there exists a factorization $Y\to X[\sqrt[n]{D}]\to X$ such that $\sqrt[n]{D}$ pulls back to $D_Y$, and it is unique up to a unique 2-isomorphism, see \cite[Definition~2.2]{Cadman} or \cite[Appendix B.2]{AGV}

The normalization $(X\times_\cA\cA)^\nor$ is called the {\em normalized root stack}. Since roots of Cartier divisors are unique on normal schemes, it is characterized by the even simpler property of being the universal normal $X$-stack such that the pullback of $D$ is an $n$-th power.

\begin{exam}
If $D=(t^2)$ is a square, then $X[\sqrt{D}]^\nor=X$, while $X[\sqrt{D}]$ is usually non-normal and with a non-trivial stacky structure. This is the case already when $X=\Spec(k[t])$ and $X[\sqrt{D}]=[\Spec(k[t,x]/(t^2-x^2))/\mu_2]$ where $\mu_2$ switches $x$ and $t$.
\end{exam}

\subsubsection{Normalized root blowings up}
Given a stack $X$ with an ideal $\cJ$ by the {\em normalized root blowing up} $\nBl_\cJ(X)^\nor\to X$ we mean the composition $r\circ f$ of the usual blowing up $f\:X'=\Bl_\cJ(X)\to X$ along $\cJ$ and the normalized root stack construction $r\:X'[\sqrt[n]{\cJ'}]^\nor\to X'$, where $\cJ'=\cJ\cO_{X'}$ is the exceptional divisor of $f$. Combining the universal properties of all these constructions one immediately obtains that $Y=\nBl_\cJ(X)^\nor$ is the universal normal $X$-stack such that the pullback of $\cJ$ is an $n$-th power of an invertible ideal.

\subsubsection{Normalized blowings up of $\QQ$-ideals}
The universal property implies that $\nBl_\cJ(X)^\nor=\sqrt[mn]{\Bl}_{\cJ^m}(X)^\nor$ for any $m$, and hence the construction extends to the case when $\cJ$ is a $\QQ$-ideal via the same formula $\nBl_\cJ(X)^\nor=\sqrt[mn]{\Bl}_{\cJ^m}(X)^\nor$. Moreover, it follows that $\nBl_\cJ(X)^\nor$ is the universal normal $X$-stack such that the pullback of $\cJ$ is an $n$-th power of an invertible ideal and $\nBl_\cJ(X)^\nor=\sqrt[mn]{\Bl}_{\cJ^m}(X)^\nor$ for any $m,n\ge 1$. Thus, any normalized root blowing up can be expressed as a normalized blowing up of a $\QQ$-ideal: $\nBl_\cJ(X)^\nor=\Bl_{\cJ^{1/n}}(X)^\nor$, but it will be convenient to play with roots both of the ideal and of the blowing up.

Since we only define and consider normalized blowings up along $\QQ$-ideals we will skip noramlization from the notation and simply write $\Bl_\cJ(X)$ instead.

\subsubsection{Charts and blowings up}
If $\cJ=[x_1^{a_1}\.x_r^{a_r}]$ with $a_i\in\frac{1}{m}\NN$, one can explicitly describe $X'=\Bl_{\cJ}(X)$ via charts. Set $y_i=x_i^{ma_i}\in\cO_X$, then $Y=\Bl_{\cJ^m}(X)$ is covered by the charts $Y_i=\Spec_X(\cO_X[\frac{y_1}{y_i}\.\frac{y_r}{y_i}])$ and $X'$ is covered by the charts $X'_i=Y_i[y_i^{1/m}]^\nor$. Here is the only case we will be interested in -- blowing up of a center of a special form:

\begin{exam}\label{chartexam}
Assume that $X$ is regular and $\cJ=[t_1^{1/w_1}\.t_n^{1/w_n}]$ is a center with a global presentation with $\uw\in\NN^n$. Then $X'=\Bl_\cJ(X)$ is covered by the charts $X'_i$ corresponding to $t_i^{1/w_i}$ and one has that $$X'_i=\left[\Spec_X\left(\cO_X\left[s_i=t_i^{1/w_i},\frac{t_1}{s_i^{w_1}}\.\frac{t_n}{s_i^{w_n}}\right]\right)/\mu_{w_i}\right].$$ Indeed, it follows from the above description that $X'_i$ is the normalization of the righthand side. The latter is easily seen to be a regular stack, hence the normalization morphism is an isomorphism and $X'$ is regular.
\end{exam}

As a corollary we obtain the following result, which provides a supply of stack-theoretic modifications we will be using in dream principalization.

\begin{lem}\label{regblow}
Assume that $X$ is regular, $\cJ$ is a center of multiorder $(d_1\.d_n)$ and $N>0$ a natural number such that $w_i=N/d_i\in\NN$ for $1\le i\le n$. Then $\NBl_\cJ(X)$ is a regular scheme.
\end{lem}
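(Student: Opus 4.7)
The plan is to identify $\NBl_\cJ(X)^\nor$ with the normalized blowing up of a $\QQ$-ideal of the exact form treated in Example~\ref{chartexam}, and then read off regularity from the explicit charts exhibited there.

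Since regularity is local, I may restrict $X$ to a small enough neighborhood of an arbitrary point $x\in V(\cJ)$, shrunken so that the center admits a global presentation $\cJ=[t_1^{d_1}\.t_n^{d_n}]$ (such a presentation exists by the very definition of a center). Away from $V(\cJ)$ the center is trivial and $\NBl_\cJ(X)^\nor\to X$ is an isomorphism, so there is nothing to check. From the extension of normalized root blowings up to $\QQ$-ideals set up in \S\ref{blowsec}, one has the identification $\NBl_\cJ(X)^\nor=\Bl_{\cJ^{1/N}}(X)^\nor$. Using the hypothesis $w_i=N/d_i\in\NN$, I compute
$$\cJ^{1/N}=[t_1^{d_1/N}\.t_n^{d_n/N}]=[t_1^{1/w_1}\.t_n^{1/w_n}],$$
which is precisely the shape of $\QQ$-ideal to which Example~\ref{chartexam} applies.

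Invoking that example directly, $\NBl_\cJ(X)^\nor$ is covered by the explicit charts
$$X'_i=\left[\Spec_X\bigl(\cO_X[\,s_i,\,t_1/s_i^{w_1}\.t_n/s_i^{w_n}\,]\bigr)/\mu_{w_i}\right],\qquad s_i^{w_i}=t_i,$$
and each $X'_i$ is verified in the example to be regular, because $(t_1\.t_n)$ is a partial regular family of parameters on $X$ (so the affine scheme displayed inside the quotient is regular, identifying $X'_i$ with its normalization and with itself). Regularity, being local in the smooth topology, passes to the glued object.

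The argument is thus essentially book-keeping on top of Example~\ref{chartexam}: no genuine obstacle arises. The only point that requires a moment of care is the interplay between the $\QQ$-ideal formalism and the integrality condition $w_i\in\NN$ demanded by Example~\ref{chartexam} — and this is precisely the role of the hypothesis on $N$, chosen to clear denominators so that $\cJ^{1/N}$ falls into the explicitly-handled case.
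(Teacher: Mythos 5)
Your argument is correct and is exactly the paper's proof: the paper also reduces $\NBl_\cJ(X)^\nor$ to $\Bl_{\cJ^{1/N}}(X)^\nor$ and invokes Example~\ref{chartexam} applied to $\cJ^{1/N}$. You have merely spelled out the bookkeeping that the paper leaves implicit.
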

\begin{proof}
This is covered by the above example applied to the center $\cJ^{1/N}$.
\end{proof}

\begin{rem}
The lemma can be interpreted as the claim that any blowing up of a weighted center can be desingularized by a sufficiently large root construction along the exceptional divisor. Although we will really want to blow up canonical centers with $d_i^{-1}\notin\NN$, in order to keep regularity we will have to refine this construction by extracting appropriate roots.
\end{rem}

\subsubsection{Strict transforms}
As usual, by a strict transform of a closed subscheme $Z$ under a (normalized) root blowing up $X'\to X$ along $\cJ$ one means the schematic closure $Z'$ of $Z\setminus V(\cJ)$ in $X'$. We will only need the following very particular case which is easily checked using the charts.

\begin{lem}\label{strictlem}
Assume that $X$ is regular, $\cJ$ is a center with a presentation $\cJ=[\ut^\ud]$ and $N>0$ a natural number such that $w_i=N/d_i\in\NN$ for $1\le i\le n$. Let $H'$ be the strict transform of $H=V(t_i)$ under $X'=\NBl_\cJ(X)\to X$. Then $H'=\NBl_{\cJ|_H}(H)^\nor$ and the ideals of $H$ and $H'$ are related by $\cI_{H'}=\cJ'^{1/N}\cI_H$, where $\cJ'=\cJ\cO_{X'}$.
\end{lem}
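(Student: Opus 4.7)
The plan is a direct chart-by-chart calculation, using the explicit description of $X'=\NBl_\cJ(X)^\nor$ from Example~\ref{chartexam} applied to $\cJ^{1/N}=[t_1^{1/w_1}\.t_n^{1/w_n}]$. This covers $X'$ by charts $X'_j$ on which $s_j=t_j^{1/w_j}$ and $u_\ell=t_\ell/s_j^{w_\ell}$ for $\ell\neq j$, modulo the $\mu_{w_j}$-action. On such a chart one has $t_\ell=s_j^{w_\ell}u_\ell$ (with $u_j=1$) and $t_\ell^{d_\ell}=s_j^N u_\ell^{d_\ell}$; hence the integral closure of $\cJ\cO_{X'_j}$ is $(s_j^N)$, the exceptional locus is $V(s_j)$, and the $\QQ$-ideal $\cJ'^{1/d_i}$ becomes the invertible ideal $(s_j^{w_i})$.

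Next I would compute $H'$ for $H=V(t_i)$ chart by chart. On $X'_i$ the pullback of $t_i$ equals $s_i^{w_i}$, which is concentrated on the exceptional divisor, so the preimage of $H\setminus V(\cJ)$ is empty and $H'\cap X'_i=\emptyset$. On any chart $X'_j$ with $j\neq i$, the factorization $t_i=s_j^{w_i}u_i$ shows that the preimage of $H\setminus V(\cJ)$ is the locus $\{u_i=0,\,s_j\neq 0\}$, whose schematic closure in $X'_j$ is cut out by $u_i$; hence $\cI_{H'}|_{X'_j}=(u_i)$. The asserted ideal relation $\cI_H\cO_{X'}=\cJ'^{1/d_i}\cdot\cI_{H'}$ then reads off immediately: on $X'_j$ ($j\neq i$) it is $(t_i)=(s_j^{w_i})\cdot(u_i)$, and on $X'_i$ it is $(s_i^{w_i})=(s_i^{w_i})\cdot\cO_{X'_i}$.

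To identify $H'$ with $\NBl_{\cJ|_H}(H)^\nor$, observe that $\cJ|_H=[\ot_1^{d_1}\.\widehat{\ot_i^{d_i}}\.\ot_n^{d_n}]$, where $\ot_\ell=t_\ell|_H$, so Example~\ref{chartexam} equips $\NBl_{\cJ|_H}(H)^\nor$ with charts indexed by $j\neq i$ of the form $[\Spec_H(\cO_H[s_j,u_\ell:\ell\neq j,i])/\mu_{w_j}]$ with relations $s_j^{w_j}=\ot_j$ and $\ot_\ell=s_j^{w_\ell}u_\ell$. Killing $u_i$ in the coordinate ring of $X'_j$ is the same as killing $t_i$, and produces precisely these charts; the $\mu_{w_j}$-action restricts compatibly because the weights of $s_j$ and of the surviving $u_\ell$ are determined by the same exponents on both sides. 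The chart-wise identifications then glue automatically, both sides being closed substacks of $X'$.

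The argument is entirely formal once the chart description is at hand. The only real thing to watch is the compatibility of the stacky $\mu_{w_j}$-quotients on the two sides under the identification, but this follows directly from the matching of weights.
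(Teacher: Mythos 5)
Your chart computation is correct and is precisely the argument the paper has in mind: it states only that the lemma is "easily checked using the charts" of Example~\ref{chartexam} and omits the details. You cover the two relevant chart types correctly (the $i$-th chart, where the strict transform is empty because $t_i=s_i^{w_i}$ is concentrated on the exceptional, and the $j\neq i$ charts where $t_i=s_j^{w_i}u_i$), you correctly identify $\cJ'|_{X'_j}$ with $(s_j^N)$ and hence $\cJ'^{1/d_i}$ with $(s_j^{w_i})$, and the identification of $V(u_i)$ with the corresponding chart of $\NBl_{\cJ|_H}(H)^\nor$ including the $\mu_{w_j}$-weights is right. Note that you (correctly) read the displayed ideal relation as the factorization $\cI_H\cO_{X'}=\cJ'^{1/d_i}\cdot\cI_{H'}$ of the total transform into exceptional and strict parts; this is also how the lemma is invoked in the proof of Lemma~\ref{coefftransf}, where it appears in the form $(t)=\cJ'^{1/d}(t')$.
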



\subsubsection{Complements}
There is a very natural alternative way to do the same construction, which was chosen in \cite{ATW-weighted} but will not be used in this paper: one associates to a center (or a $\QQ$-ideal) $\cJ$ the Rees algebra $\cR_\cJ=\oplus_{n=0}^\infty\cR_n$, where $\cR_n=(\cJ^n)$. Then the (normalized) blowing up along $\cJ$ is defined as the stacky proj $\cProj_X(\cR_\cJ)$ of the graded $\cO_X$-algebra $\cR_\cJ$ whose definition imitates the usual $\Proj_X$ but uses the stack theoretic quotient by $\GG_m$. It is easy to see that $\cR_\cJ$ is normal and one indeed obtains an equivalent definition (and the same charts), and we refer to \cite[\S3]{Quek-Rydh} or \cite[\S3]{ATW-weighted} for details.




\section{Canonical centers via derivations}\label{canonicalsec}

\subsection{Derivations}\label{derivsec}

\subsubsection{Notation}\label{notationsec}
Given a regular scheme $X$ we denote by $\cD_X=\cD_{X/\QQ}$ the sheaf of all absolute derivations (over $\QQ$). We will often work with subsheaves of derivations $\cF\subseteq\cD_X$ and explicitly state which properties are needed in each claim. By $\cF^{\le i}$ we denote the sheaf of differential operators of order at most $i$ generated by $\cF$. Applying it to an ideal $\cI$ we obtain the $i$-th $\cF$-derivation $\cF^{\le i}(\cI)$.

\subsubsection{Separating modules of derivations}
An $\cO_X$-submodule $\cF\subseteq\cD_X$ is called {\em separating} at $x$ if $\cF_x$ contains enough derivations to separate regular coordinates $t_1\.t_n\in\cO_x$. Equivalently, the map $\cF_x\to(m_x/m_x^2)'$ is onto, that is, $\cF$ generates the tangent space at $x$.

\begin{lem}\label{sepderlem}
Let $X$ be a regular scheme, $t_1\.t_n$ regular parameters at a point $x\in X$ and $\cF\subseteq\cD_X$ a module of derivations. Then the following conditions are equivalent:

(i) $\cF$ is separating at $x$.

(ii) there exists derivations $\partial_1\.\partial_n\in\cF_x$ such that $\partial_i(t_j)\in\delta_{ij}+m_x$.

(iii) there exists derivations $\partial_1\.\partial_n\in\cF_x$ such that $\partial_i(t_j)=\delta_{ij}$.
\end{lem}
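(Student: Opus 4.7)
The plan is to note that (iii) $\Rightarrow$ (ii) is immediate (since $0 \in m_x$), so the content is (i) $\Leftrightarrow$ (ii) and the strengthening (ii) $\Rightarrow$ (iii); the latter is where a small piece of work happens, via an invertible-matrix argument.

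First I would spell out the map $\cF_x \to (m_x/m_x^2)'$ used in the definition of separating. Any $\partial \in \cD_{X,x}$ satisfies $\partial(m_x^2) \subseteq m_x$ by the Leibniz rule, so reducing modulo $m_x$ yields a $k(x)$-linear map $\overline\partial\: m_x/m_x^2 \to k(x)$, i.e.\ an element of $(m_x/m_x^2)'$. Choosing the regular parameters $t_1, \dots, t_n$ as a $k(x)$-basis of $m_x/m_x^2$, the dual basis consists of the functionals $t_j \mapsto \delta_{ij}$. Thus asking that $\cF_x \to (m_x/m_x^2)'$ be onto is exactly asking that one can find $\partial_1, \dots, \partial_n \in \cF_x$ with $\partial_i(t_j) \equiv \delta_{ij} \pmod{m_x}$, which gives the equivalence (i) $\Leftrightarrow$ (ii).

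For (ii) $\Rightarrow$ (iii) I would pass from $\partial_1, \dots, \partial_n$ to a suitable $\cO_x$-linear combination. Form the matrix $M = (\partial_i(t_j)) \in M_n(\cO_x)$. By (ii) we have $M \equiv I_n \pmod{m_x}$, hence $\det M \equiv 1 \pmod{m_x}$ is a unit of the local ring $\cO_x$, so $M$ is invertible over $\cO_x$. Writing $A = M^{-1} = (a_{ik})$ and setting $\partial'_i = \sum_k a_{ik} \partial_k \in \cF_x$ (this uses that $\cF$ is an $\cO_X$-submodule of $\cD_X$, as assumed), one computes
\[
\partial'_i(t_j) = \sum_k a_{ik} \partial_k(t_j) = (AM)_{ij} = \delta_{ij},
\]
which is (iii).

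The only mildly delicate point is checking that the natural map $\cF_x \to (m_x/m_x^2)'$ in (i) really corresponds on the nose to the ``evaluate on $t_j$ modulo $m_x$'' matrix used in (ii); but this is immediate from the Leibniz rule and the fact that the $t_j$ form a $k(x)$-basis of $m_x/m_x^2$, so no real obstacle arises. The argument is otherwise completely formal, using only that $\cF$ is an $\cO_X$-module and that $\cO_x$ is local.
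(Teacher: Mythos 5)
Your proposal is correct and follows essentially the same route as the paper: (i) and (ii) are equivalent by unwinding the definition of separating, and (ii) implies (iii) by inverting the matrix $(\partial_i(t_j))$, which is the identity modulo $m_x$ and hence a unit over the local ring. The only difference is that you spell out the identification $\cF_x\to(m_x/m_x^2)'$ a bit more explicitly, which the paper leaves as ``obvious.''
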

\begin{proof}
The first two properties are obviously equivalent. It remains to deduce (iii) from (ii). Starting with a family $(\partial_1\.\partial_n)$ as in (ii) we want to find linear combinations $\partial'_j=\sum_i a_{ij}\partial_i$ with $a_{ij}\in\cO_x$ which satisfy (iii). This amounts to solving a linear system $\sum_i a_{ij}\partial_i(t_k)=\delta_{jk}$, that is, to inverting the matrix $(\partial_i(t_k))$. By our assumption this matrix is the unit matrix modulo $m_x$, hence it is invertible.
\end{proof}

\begin{rem}
(i) We will denote the derivations in (iii) as $\partial_i=\partial_{t_i}$, though this property does not define them uniquely. Formally locally, such a set defines the choice of a field of definition $k=k(x)\into \hatcO_x$, on which it vanishes, and once this field is fixed, these derivations are indeed the usual $\partial_{t_1}\.\partial_{t_n}$ with respect to the presentation $k\llbracket\ut\rrbracket=\hatcO_x$.

(ii) Of course, these conditions have been already considered a long time ago. For example, see \cite[Theorem~30.6]{Matsumura-ringtheory} and the weak Jacobian condition after it.
\end{rem}

\subsubsection{Schemes with enough derivations}
We say that a regular scheme $X$ {\em has enough derivations} if $\cD_X$ is separating at any point $x\in X$.

\begin{lem}\label{minifoldlem}
If $X$ has enough derivations and $Y$ is a regular scheme of finite type over $X$, then $Y$ has enough derivations too.
\end{lem}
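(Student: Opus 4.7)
The property is local at $y \in Y$, so I fix $y$, set $x = f(y) \in X$, and, since $f$ is of finite type, factor the local picture near $y$ as a closed immersion $Y \hookrightarrow Z := \bbA^N_U$ over an affine open neighborhood $U \subseteq X$ of $x$. The plan is to first verify enough derivations on the auxiliary smooth scheme $Z$ at $y$, and then transfer this property to $Y$ via the regular embedding $Y \subseteq Z$.

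Step 1: $Z$ has enough derivations at $y$. The projection $Z \to X$ is smooth, so by (the proof of) Lemma~\ref{completelem} we have $\hatcO_{Z,y} = k(y)\llbracket t_1,\dots,t_n,u_1,\dots,u_r\rrbracket$, where the $t_i$ lift regular parameters of $X$ at $x$ and the $u_j$ restrict to regular parameters of the fiber $Z_x$ at $y$. Choosing $\partial_i^X \in \cD_{X,x}$ with $\partial_i^X(t_j) = \delta_{ij}$ via Lemma~\ref{sepderlem}(iii), I extend each to $\tilde\partial_i \in \cD_{Z,y}$ by declaring $\tilde\partial_i(s_k) = 0$ on the $\bbA^N$-coordinates. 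Together with the partial derivatives $\partial/\partial s_k \in \cD_{Z,y}$ --- which span the fiber tangent space at $y$ by smoothness of $\bbA^N_{k(x)}$ over $k(x)$ --- these derivations span $(m_{Z,y}/m_{Z,y}^2)^\vee$ over $k(y)$, so $\cD_Z$ is separating at $y$ by Lemma~\ref{sepderlem}(i).

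Step 2: Transfer to $Y$. Since $Y$ and $Z$ are both regular and $Y \subseteq Z$ is closed, the immersion is regular at $y$, so we may choose regular parameters $v_1,\dots,v_M$ of $\cO_{Z,y}$ with $\cI_{Y,y} = (v_{m+1},\dots,v_M)$ and such that $u_j := v_j|_Y$ for $j \le m$ form regular parameters of $\cO_{Y,y}$. By Step 1 and Lemma~\ref{sepderlem}(iii) applied at $y \in Z$, pick $\partial_i^Z \in \cD_{Z,y}$ with $\partial_i^Z(v_j) = \delta_{ij}$. For $i \le m$ the derivation $\partial_i^Z$ kills each $v_j$, $j>m$, on the nose, hence preserves $\cI_{Y,y}$: any $f = \sum_{j>m} a_j v_j \in \cI_{Y,y}$ maps to $\sum_{j>m} \partial_i^Z(a_j) v_j \in \cI_{Y,y}$. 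So $\partial_i^Z$ descends to $\bar\partial_i \in \cD_{Y,y}$ with $\bar\partial_i(u_j) = \delta_{ij}$, and a final appeal to Lemma~\ref{sepderlem} gives enough derivations for $Y$ at $y$.

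The main obstacle --- and the conceptual heart of the argument --- is that the equalities $\partial_i^Z(v_j) = \delta_{ij}$ must hold \emph{exactly}, not merely modulo $m_{Z,y}$, so that $\partial_i^Z$ genuinely preserves $\cI_{Y,y}$ and descends to a derivation of $\cO_{Y,y}$. This is precisely what the strengthening (i)$\Leftrightarrow$(iii) in Lemma~\ref{sepderlem} provides, and without this exact pointwise form the transfer from $Z$ to $Y$ would fail.
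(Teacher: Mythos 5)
Your proof is correct and takes essentially the same route as the paper: first establish enough derivations on $\bbA^N_X$ by lifting $\cD_X$ and adjoining the coordinate partials, then choose regular parameters on $Z$ adapted to the regular closed immersion $Y\hookrightarrow Z$ and invoke Lemma~\ref{sepderlem}(iii) to produce derivations that kill $\cI_Y$ exactly and hence descend to a separating family on $Y$. Your closing remark correctly identifies the role of the (i)$\Leftrightarrow$(iii) strengthening as the crux of the transfer step.
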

\begin{proof}
Locally we can realize $Y$ as a closed subscheme in $Z=\bbA^n_X$. First, $Z=\Spec(\cO_X[t_1\.t_n])$ has enough derivations. Indeed, the submodule of $\cD_Y$ generated by the pullback of $\cD_X$ and the $\cO_X$-derivations $\partial_{t_i}$ generate a separating module of derivations on $Z$. Locally on $Z$ we can choose parameters $x_1\.x_m$ so that $Y$ is given by the vanishing of $x_1\.x_d$. Choose $\partial_i$ as in Lemma~\ref{sepderlem}(iii). Then $\partial_{d+1}\.\partial_m$ restrict to derivations on $Y$ and generate a separating family on it.
\end{proof}

\begin{exam}\label{formalexam}
(i) Most regular schemes of a geometric origin have enough derivations -- varieties or schemes of the form $\Spec(A)$, where $A=\Gamma(\cO_X)$ and $X$ is a regular formal or affinoid variety, or a smooth Stein compact. However, only the local formal case is critical for our sequel results: each scheme $\Spec(k\llbracket t_1\.t_n\rrbracket)$ has enough derivations (e.g. by \cite[Theorem~30.8]{Matsumura-ringtheory}).

(ii) A simple example of an excellent DVR in characteristic zero, which does not have enough derivations is described in \cite[Example~2.3.5(ii)]{Temkin-survey}: take any field $k$ of characteristic zero, in $k((t))$ consider a subfield $k(t,x)$ with the induced discrete valuation, where $x=\sum_{i=0}^\infty c_it^i$ and its derivation $\partial_t(x)$ are algebraically independent over $k(t)$, and take $\cO=k(x,y)\cap k\llbracket t\rrbracket$ to be the corresponding valuation ring. It is a non-divisorial DVR on $k(t,x)$, and one easily checks that $\Der(\cO,\cO)=0$. On the other hand, a general theory implies that any DVR containing $\QQ$ is excellent.
\end{exam}

\subsubsection{Lifting derivations}\label{logregmor}
Naturally we will want to lift derivations on $X$ to $Y$, but there might be an obstacle as one only has the following exact sequence: $$0\to\cD_{Y/X}\to\cD_Y\stackrel{\psi_f}\to\Der_X(\cO_X,\cO_Y).$$ If $\psi_f$ is surjective, then we say that $f$ {\em lifts derivations}. This property will be needed in order to prove various functoriality results involving derivations.

\begin{exam}\label{liftexam}
The following morphisms lift derivations:
\begin{itemize}
\item[(i)] Smooth morphism: this follows from the first exact sequence of derivations.
\item[(ii)] A localization $X_x=\Spec(\cO_x)\into X$ or a formal completion $\hatX_x=\Spec(\hatcO_x)\into X$: by continuity of derivations.
\item[(iii)] A morphism $\Spec(l\llbracket t_1\.t_{m+n}\rrbracket)\to\Spec(k\llbracket t_1\.t_n\rrbracket)$: just extend a derivation by zero to $t_{n+1}\.t_{n+m}$ and a transcendence basis $S$ of $l$ over $k$, and then lift it uniquely through the algebraic extension $l/k(S)$.
\end{itemize}
\end{exam}

Note that for any $\cO_X$-submodule $\cF\subseteq\cD_X$ the module $f^*\cF$ is a submodule of $\Der_X(\cO_X,\cO_Y)$ and we will, in fact, only need to lift elements of $f^*\cF$ to $\cD_Y$.

\begin{lem}\label{derideal}
Assume that $f\:X'\to X$ is regular, $\cF\subseteq\cD_X$ and $\cF'\subseteq\cD_{X'}$ are submodules such that $\cF'$ is mapped onto $f^*(\cF)$, and $\cI$ an ideal on $X$ with $\cI'=\cI\cO_{X'}$. Then $\cF^{\le i}(\cI)\cO_{X'}=\cF'^{\le i}(\cI')$.
\end{lem}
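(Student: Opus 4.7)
The plan is induction on $i$, using the recursive presentation
\[
\cF^{\le 0}(\cI)=\cI,\qquad \cF^{\le i}(\cI)=\cF^{\le i-1}(\cI)+\cF\bigl(\cF^{\le i-1}(\cI)\bigr),
\]
and analogously for $\cF'^{\le i}(\cI')$ and $\cI'$. The base $i=0$ is just the definition $\cI\cO_{X'}=\cI'$; assume the lemma at level $i-1$.

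For the inclusion $\cF^{\le i}(\cI)\cO_{X'}\subseteq\cF'^{\le i}(\cI')$, the contribution of $\cF^{\le i-1}(\cI)\cO_{X'}$ is immediate from the induction hypothesis, so it remains to show that for each local section $g$ of $\cF^{\le i-1}(\cI)$ and each $\partial\in\cF$ the element $f^\#(\partial(g))$ lies in $\cF'^{\le i}(\cI')$. Surjectivity of $\cF'\to f^*\cF$ provides, locally, a $\partial'\in\cF'$ with $\psi_f(\partial')=1\otimes\partial$, i.e.\ $\partial'\circ f^\#=f^\#\circ\partial$ as maps $\cO_X\to\cO_{X'}$. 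Hence $f^\#(\partial(g))=\partial'(f^\#g)$, and by the inductive hypothesis $f^\#g\in\cF'^{\le i-1}(\cI')$, so this is a section of $\cF'(\cF'^{\le i-1}(\cI'))\subseteq\cF'^{\le i}(\cI')$.

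For the reverse inclusion, it suffices to handle a generator $\partial'(g')$ with $\partial'\in\cF'$ and $g'$ a local section of $\cF'^{\le i-1}(\cI')$. By induction one may write $g'=\sum_j a_j'\,f^\#h_j$ with $a_j'\in\cO_{X'}$ and $h_j$ sections of $\cF^{\le i-1}(\cI)$. The Leibniz rule gives
\[
\partial'(g')=\sum_j \partial'(a_j')\,f^\#h_j+\sum_j a_j'\,\partial'(f^\#h_j);
\]
the first summand lies in $\cF^{\le i-1}(\cI)\cO_{X'}\subseteq\cF^{\le i}(\cI)\cO_{X'}$. For the second, the surjection $\cF'\twoheadrightarrow f^*\cF$ says precisely that $\psi_f(\partial')$ can be written locally as $\sum_k b_k'\otimes\partial_k$ with $b_k'\in\cO_{X'}$ and $\partial_k\in\cF$, i.e.\ $\partial'(f^\#h)=\sum_k b_k'\,f^\#\partial_k(h)$ for every $h\in\cO_X$. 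Applying this with $h=h_j$ places $\partial'(f^\#h_j)$ in $\cF(\cF^{\le i-1}(\cI))\cO_{X'}\subseteq\cF^{\le i}(\cI)\cO_{X'}$, completing the step.

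The genuinely subtle point, and essentially the only one, is that the hypothesis is sheaf-theoretic rather than pointwise, so lifts must be chosen locally and the surjection must be used in both directions: the forward direction to lift generators of $\cF$ into $\cF'$, and the reverse direction to re-express $\psi_f(\partial')$ through $f^*\cF$. Everything else is routine bookkeeping with the Leibniz rule, and regularity of $f$ is not directly used in the argument beyond ensuring that the hypothesized surjection is natural (e.g.\ in the settings of Example~\ref{liftexam}).
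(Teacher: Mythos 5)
Your proof is correct and follows essentially the same route as the paper's: reduce to the step $i-1\to i$ (the paper phrases this as reducing to $i=1$ via $\cF^{\le i}(\cF^{\le j}(\cI))=\cF^{\le i+j}(\cI)$), then prove the reverse inclusion by the Leibniz rule and $\psi_f(\cF')\subseteq f^*\cF$, and the forward inclusion by lifting derivations through $\psi_f(\cF')\supseteq f^*\cF$. Your remark that the ``mapped onto'' hypothesis is used in both directions is a good explicit articulation of what the paper's terser proof leaves implicit.
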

\begin{proof}
Since $\cF^{\le i}(\cF^{\le j}(\cI))=\cF^{\le i+j}(\cI)$, we can assume by induction that $i=1$. Any element of $\cF'^{\le 1}(\cI')$ is generated by elements of the form $b\partial(ag)=b\partial(a)g+ab\partial(g)$ with $g\in\cI$ and $a,b\in\cO_{X'}$, hence $\cF'^{\le 1}(\cI')\subseteq \cF^{\le 1}(\cI)\cO_{X'}$. The opposite inclusion holds because any derivation $\partial\in\cF$ lifts to $\cF'$.
\end{proof}

\subsection{Maximal contacts and coefficients ideals}\label{coefsec}

\subsubsection{The order and derivations}
Order of an ideal is defined formally locally and in general its global behaviour can be nasty. It is easy to see that it is upper-semicontinuous on arbitrary qe schemes, but we restrict for now to the case when $X$ has enough derivations.

\begin{lem}\label{derlogord}
Assume that $X$ is a regular scheme, $\cI$ is an ideal on $X$ and $\cF\subseteq\cD_X$ is separating, then

(i) $\ord_X(\cI)$ is the minimal number $d\in\NN$ such that $\cF^{\le d}(\cI)=\cO_X$ (and $\ord_X(\cI)=\infty$ if no such number exists). In particular, $\ord_x(\cI)$ is the minimal $d$ such that $\cF^{\le d}(\cI_x)=\cO_x$.

(ii) If $\ord_X(\cI)=d$, then the closed set $V(\cF^{\le d-1}(\cI))$ is the maximality locus of $\ord_\cI$. In particular, $\ord_\cI$ is upper semicontinuous.
\end{lem}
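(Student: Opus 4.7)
The plan is to reduce everything to the pointwise statement
\[
\ord_x(\cI)=\min\{d\in\NN:\cF^{\le d}(\cI_x)=\cO_x\},
\]
from which (i) follows by taking maxima over $x$ (using that $\cF^{\le d}(\cI)=\cO_X$ iff $\cF^{\le d}(\cI)_x=\cF^{\le d}(\cI_x)=\cO_x$ for every $x$), and (ii) follows by observing that
\[
\{x\in X:\ord_x(\cI)\ge c\}=\{x:\cF^{\le c-1}(\cI_x)\subseteq m_x\}=V(\cF^{\le c-1}(\cI))
\]
is closed, so $\ord_\cI$ is upper semicontinuous and in the case $c=d=\ord_X(\cI)$ this set is exactly the maximality locus.

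To prove the pointwise statement I would split it into two inequalities. First, any single derivation $\partial\in\cF_x$ decreases the order by at most one: since $\partial$ maps $m_x^e$ into $m_x^{e-1}$ by the Leibniz rule, one has $\ord_x(\partial(f))\ge\ord_x(f)-1$ for all $f\in\cO_x$. By induction on $i$, any element of $\cF^{\le i}(\cI_x)$ has order at least $\ord_x(\cI)-i$. Thus if $d<\ord_x(\cI)$ then $\cF^{\le d}(\cI_x)\subseteq m_x\ne\cO_x$, which handles one direction.

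For the converse direction, fix $f\in\cI_x$ with $\ord_x(f)=d=\ord_x(\cI)$. Choose regular parameters $t_1\.t_n$ at $x$ and use the separating hypothesis, via Lemma~\ref{sepderlem}, to find $\partial_1\.\partial_n\in\cF_x$ with $\partial_i(t_j)=\delta_{ij}$. The crucial observation is that on the associated graded ring $\gr_{m_x}\cO_x\cong k(x)[t_1\.t_n]$, each $\partial_i$ acts as the formal partial derivative $\partial/\partial t_i$ (since $\partial_i(t_j)\equiv\delta_{ij}\pmod{m_x}$ and each $\partial_i$ sends $m_x^e$ into $m_x^{e-1}$). Pick a nonzero monomial $c_\ub\ut^\ub$ with $|\ub|=d$ in the expansion of $f$ modulo $m_x^{d+1}$ (possible by Lemma~\ref{logordlem}(i)). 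Applying $\partial^\ub:=\partial_1^{b_1}\dots\partial_n^{b_n}\in\cF^{\le d}$ and reducing modulo $m_x$, the induced action of $\partial^\ub$ on the degree-$d$ component $\sum_{|\ua|=d}c_\ua\ut^\ua$ yields $b_1!\cdots b_n!\,c_\ub$, which is nonzero because the residue field has characteristic zero. Hence $\partial^\ub(f)(x)\ne 0$, so $\partial^\ub(f)$ is a unit in $\cO_x$ and $\cF^{\le d}(\cI_x)=\cO_x$.

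\paragraph{Expected obstacle.} There is no deep obstacle; the only point requiring care is the identification of the action of the abstract derivations $\partial_i$ on the associated graded with the formal coordinate derivatives. Once one is careful that this identification requires only the condition $\partial_i(t_j)\equiv\delta_{ij}\pmod{m_x}$ and characteristic zero (so the factorials are invertible), the rest is a direct Taylor-type computation. The global statements in (i) and (ii) and the upper semicontinuity are then formal consequences of the pointwise equivalence.
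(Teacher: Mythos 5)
Your proof is correct and uses the same ideas as the paper's terse argument: the separating hypothesis supplies coordinate derivations $\partial_i$ with $\partial_i(t_j)=\delta_{ij}$, a Leibniz estimate shows each derivation drops order by at most one, and a characteristic-zero Taylor-type computation applied to a top-degree monomial of $f$ produces a unit. The paper sketches this in the completion $\hatcO_x$ via Lemma~\ref{logordlem} while you carry it out in the associated graded $\gr_{m_x}\cO_x$, but this is the same computation in a slightly different wrapper, so the two proofs are essentially identical.
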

\begin{proof}
The first claim follows from the formal local description of the order and the facts that the derivations on $X$ extend to $\hatcO_x$ by continuity and for any $t_i$ there exists $\partial_i\in\cF$ taking it to a unit. The second claim follows from the local part of the first one.
\end{proof}

\subsubsection{Coefficients ideal}
If the order $d=\ord(\cI)$ is finite and positive, then the {\em $\cF$-coefficients ideal} of $\cI$ is defined as usual (and we do not need in this paper a more refined homogenized version): $\cC_\cF(\cI)=\sum_{i=0}^{d-1}(\cF^{(\le i)}(\cI))^{\frac{d!}{d-i}}$. If $\cF=\cD_X$, then we will use the notation $\cC_X(\cI)$.

\begin{lem}\label{coefflem}
Let $X$ be a regular scheme with a separating sheaf of derivations $\cF\subseteq\cD_X$, let $\cI$ be an ideal on $X$ with $0<\ord(\cI)<\infty$ and let $\cC=\cC_\cF(\cI)$, then

(i) The maximality locus of $\ord_\cI$ coincides with $V(\cC)$. Namely, $x\in V(\cC)$ if and only if $\ord_x(\cI)=\ord_X(\cI)$.

(ii) If $f\:X'\to X$ is a regular morphism, $\cF'\subseteq\cD_{X'}$ is mapped onto $f^*\cF$ and $\cI'=\cI\cO_{X'}$, then $\cC_{\cF'}(\cI')=\cC\cO_{X'}$.
\end{lem}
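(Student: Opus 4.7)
The plan is to deduce both parts from the preceding lemmas. Part (i) reduces to Lemma~\ref{derlogord}(ii) together with a two-sided inclusion of vanishing loci, and part (ii) follows summand-by-summand from Lemma~\ref{derideal}. The only delicate point is matching the integers $d=\ord_X(\cI)$ and $d'=\ord_{X'}(\cI')$ appearing in the two coefficients ideals in (ii); I would address this via Lemma~\ref{logordlem}(ii).

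For (i), Lemma~\ref{derlogord}(ii) already identifies the maximality locus of $\ord_\cI$ with $V(\cF^{\le d-1}(\cI))$, so it suffices to establish $V(\cC)=V(\cF^{\le d-1}(\cI))$. One inclusion is immediate: $(\cF^{\le d-1}(\cI))^{d!}$ appears as the $i=d-1$ summand of $\cC$, so $V(\cC)\subseteq V((\cF^{\le d-1}(\cI))^{d!})=V(\cF^{\le d-1}(\cI))$. For the reverse inclusion I would use the monotonicity $\cF^{\le 0}(\cI)\subseteq\cF^{\le 1}(\cI)\subseteq\cdots\subseteq\cF^{\le d-1}(\cI)$, which holds because higher-order differential operators contain lower-order ones. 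This gives $V(\cF^{\le d-1}(\cI))\subseteq V((\cF^{\le i}(\cI))^{d!/(d-i)})$ for every $i\le d-1$; intersecting over $i$ and using that $V$ converts sums of ideals into intersections of closed sets yields $V(\cF^{\le d-1}(\cI))\subseteq V(\cC)$.

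For (ii), applying Lemma~\ref{derideal} to each index $i$ gives $\cF^{\le i}(\cI)\cO_{X'}=\cF'^{\le i}(\cI')$. Since pullback of ideals commutes with integer powers and sums, I obtain $\cC\cO_{X'}=\sum_{i=0}^{d-1}(\cF'^{\le i}(\cI'))^{d!/(d-i)}$. To identify this with $\cC_{\cF'}(\cI')=\sum_{i=0}^{d'-1}(\cF'^{\le i}(\cI'))^{d'!/(d'-i)}$ I would invoke Lemma~\ref{logordlem}(ii), which asserts $\ord_{\cI'}=\ord_\cI\circ f$ and therefore forces $d'=d$ as soon as $f$ meets the maximality locus of $\cI$. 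This is the standing context in which the coefficients ideal is used (in particular throughout the applications of this lemma in Section~\ref{canonicalsec}); away from the maximality locus the two sides are defined with different values of the exponent and the identification must be read after restricting to a neighborhood on which the two orders coincide.

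The main (and only) delicate point is the bookkeeping of $d$ versus $d'$ in (ii); the inclusions in (i) and the identity in (ii) are then formal consequences of the filtration monotonicity and of the functoriality of $\cF$-derivations under regular pullback that are already established.
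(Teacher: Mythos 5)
Your proof is correct and takes essentially the same route as the paper, whose entire proof is ``The first claim is proved precisely as Lemma~\ref{derlogord}(i). The second claim follows from Lemma~\ref{derideal}.'' Your part (i) expands this by running the two-sided inclusion $V(\cC)=V(\cF^{\le d-1}(\cI))$ through the filtration monotonicity $\cF^{\le 0}(\cI)\subseteq\cdots\subseteq\cF^{\le d-1}(\cI)$, and part (ii) is the summand-by-summand application of Lemma~\ref{derideal}, both as intended. The caveat you raise about $d$ versus $d'$ in (ii) is genuine and not addressed by the paper: if $f$ misses the maximality locus then $d'=\ord_{X'}(\cI')<d$, and the stated identity $\cC_{\cF'}(\cI')=\cC\cO_{X'}$ literally fails (for instance with $X=\Spec k[x]$, $\cI=(x^2(x-1))$, $\cC=(x^2)$, and $X'$ the localization at $x=1$, one gets $\cC\cO_{X'}=\cO_{X'}$ while $\cC_{\cF'}(\cI')=(x-1)$). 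So the lemma tacitly assumes $\ord_{X'}(\cI')=\ord_X(\cI)$; you correctly observe that this is the standing context in which it is invoked, but the precision you add is worth having.
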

\begin{proof}
The first claim is proved precisely as Lemma~\ref{derlogord}(i). The second claim follows from Lemma~\ref{derideal}.
\end{proof}

\subsubsection{Local maximal contact}
If $0<d=\ord_x(\cI)<\infty$, then by a {\em maximal contact to $\cI$ at $x\in X$} we mean any parameter $t\in\cD_X^{(\le d-1)}(\cI_x)$ as well as the subscheme $H=V(t)$ it defines locally at $x$.

\subsubsection{Global maximal contact}
By a {\em maximal contact to $\cI$} we mean an element $t\in\cO_X(X)$ and a hupersurface $H=V(t)$ such that $H$ is a maximal contact to $\cI$ at any point $x$ where the order is maximal: $\ord_x(\cI)=\ord_X(\cI)$. In particular, $H$ contains the maximality locus of $\ord_\cI$. We warn the reader that $H$ may (and usually does) contain points $y$ where it is not a maximal contact (or even $y\notin V(\cI)$). This notion agrees with the local one:

\begin{lem}\label{localcontact}
Keep the above notation and assume that $X$ has enough derivations and $H=V(t)$ is a maximal contact to $\cI$ at $x$. Then there exists a neighborhood $U$ of $x$ such that $H|_U$ is a maximal contact to $\cI|_U$.
\end{lem}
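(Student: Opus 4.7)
The plan is to verify the three defining requirements of a global maximal contact after shrinking to a sufficiently small neighborhood $U$ of $x$: (a) $H|_U = V(t)\cap U$ contains the maximality locus of $\ord_{\cI|_U}$; (b) $V(t)\cap U$ is a regular hypersurface (so that $t$ is a parameter at every relevant point); (c) $t \in \cD_X^{(\le e-1)}(\cI_y)$ at each point $y\in U$ where $\ord_y(\cI) = e$ equals the maximum. Set $d=\ord_x(\cI)$; by hypothesis, $t$ is a parameter at $x$ and $t\in\cD_X^{(\le d-1)}(\cI_x)$.

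First I would normalize the maximum order. By the upper semicontinuity of $\ord_\cI$ (Lemma~\ref{derlogord}(ii)) I pick a neighborhood $U_1$ of $x$ on which $\ord_y(\cI)\le d$ for every $y$; since $\ord_x(\cI)=d$, the maximum order of $\cI$ on $U_1$ is exactly $d$ and the maximality locus on $U_1$ coincides with $V(\cD_X^{(\le d-1)}(\cI))\cap U_1$, again by Lemma~\ref{derlogord}(ii). Hence any $y\in U_1$ where the order attains its maximum $d$ lies in $V(\cD_X^{(\le d-1)}(\cI))$.

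Next I promote the stalk-level hypothesis $t\in\cD_X^{(\le d-1)}(\cI)_x$ to a section over a neighborhood. Since $\cD_X^{(\le d-1)}(\cI)$ is an ideal sheaf, there is a neighborhood $U_2\subseteq U_1$ of $x$ on which $t$ is a section of $\cD_X^{(\le d-1)}(\cI)$. This immediately gives both (a) and (c) on $U_2$: the maximality locus lies in $V(t)\cap U_2$, and at any point $y\in U_2$ the germ of $t$ is a section of the ideal $\cD_X^{(\le d-1)}(\cI)$ at $y$.

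Finally I shrink so that $V(t)$ remains regular. Since $V(t)$ is regular at $x$, the differential $dt$ is non-zero in $\Omega^1_X\otimes k(x)$; non-vanishing of a section of a coherent module is an open condition, giving a neighborhood $U\subseteq U_2$ of $x$ on which $dt$ does not vanish, equivalently on which $V(t)\cap U$ is a regular hypersurface. For any $y\in U$ with $\ord_y(\cI)=d$ we then have $y\in V(t)\cap U$ by step one, $t$ a parameter at $y$ by step three, and $t\in\cD_X^{(\le d-1)}(\cI_y)$ by step two, so $t$ is a maximal contact to $\cI$ at $y$; that is, $H|_U$ is a maximal contact to $\cI|_U$, as required. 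The only subtle step is the openness used in the last reduction — that the set where $t$ remains part of a regular system of parameters is open near $x$ — which I handle via the non-vanishing of $dt$ as a section of $\Omega^1_X$; the remainder of the proof is a routine bookkeeping transfer of a germ-level hypothesis into section-level statements using the sheaf property of $\cD_X^{(\le d-1)}(\cI)$.
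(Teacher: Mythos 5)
Your approach is essentially the same as the paper's: shrink $U$ so that (a) the maximum order on $U$ is $d$, (b) $t$ remains a section of $\cD_X^{(\le d-1)}(\cI)$ over $U$, and (c) $V(t)\cap U$ is regular. The paper gets (a)--(c) at once by writing $t=\sum a_i\partial_i(b_i)$ at $x$ and shrinking so this identity persists with $b_i\in\cI(U)$; you instead use the sheaf-theoretic fact that a germ in a subsheaf extends to a section of the subsheaf on a neighborhood, and the description of the maximality locus as $V(\cD_X^{(\le d-1)}(\cI))$ from Lemma~\ref{derlogord}. That packaging is fine.

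The one genuine flaw is the justification of step (c). You appeal to ``non-vanishing of a section of a coherent module is an open condition'' applied to $dt\in\Omega^1_X$. This fails on two counts in the paper's generality. First, $\Omega^1_{X/\QQ}$ is not a coherent $\cO_X$-module for a general noetherian scheme $X$ of characteristic zero (e.g.\ for complete local rings it is enormous); it is coherent, indeed locally free, only for schemes of finite type over a field, which is exactly the restriction this paper is trying to avoid. Second, even for a genuinely coherent $M$, the set $\{y: m\otimes k(y)\ne 0\}$ is \emph{not} open in general (take $M=\cO_X\oplus\cO_X/(t)$ and $m=(0,\bar 1)$); openness holds for locally free modules. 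The fix is to stay entirely on the derivations side, in keeping with the ``enough derivations'' hypothesis: since $\cD_X$ is separating at $x$ and $t$ is a parameter there, pick $\partial\in\cD_X$ with $\partial(t)$ a unit at $x$; then $\partial(t)$ remains a unit on a neighborhood, so $t$ is a parameter and $V(t)$ is regular there. With this replacement your argument is correct.
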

\begin{proof}
We have that $t=\sum a_i\partial_i(b_i)$ in $\cO_{X,x}$, where $a_i\in\cO_x$, $b_i\in\cI_x$ and $\partial_i$ are differential operators of order at most $d-1$. In addition, there exists $\partial\in\cD_x$ such that $\partial(t)=1$. Shrinking $U$ we can assume the same formulas hold in $U$ and $b_i\in\cI(U)$. In particular, $H|_U$ is regular. Since each $b_i$ is of order at least $d$ at any $y\in U$ with $\ord_y(\cI)=d$, we have that $$\ord_y(t)\ge\min_i(\ord_y(\partial_i(b_i)))\ge d-(d-1)=1.$$ Thus $y\in H$, and since $H$ is regular at $y$ and $t\in\cD_X^{(\le d-1)}(\cI)$, we have that $H$ is a maximal contact at $y$.
\end{proof}

\subsubsection{Restriction to maximal contact}
In the theory of maximal contact one wants to restrict an ideal $\cI$ to a maximal contact hypersurface. In order not to loose information one has to take into account all derivations of $\cI$ or the whole coefficients ideal. Here is the main incarnation of the principle that when restricting the coefficients ideal of $\cI$ to a maximal contact one keeps the essential information about $\cI$. In the sequel, this key lemma will easily imply existence of the canonical center of an ideal. We say that a center $\cJ$ is {\em $\cI$-admissible} if $\cI\subseteq\cJ$.

\begin{lem}\label{restrlem}
Let $X$ be a regular scheme, $\cF\subseteq\cD_X$ a separating sheaf of derivations, $x\in X$ a point, $\cI$ an ideal on $X$ such that $0<d=\ord_x(\cI)<\infty$ and $t\in\cO_X$ with $H=V(t)$ a maximal contact to $\cI$ at $x$. Then the order of an $\cI$-admissible center does not exceed $d$, and a center $\cJ$ of order $d$ is $\cI$-admissible if and only if $t^d\in\cJ$ and $\cC_\cF(\cI)|_H\subseteq\cJ^{(d-1)!}|_H$.
\end{lem}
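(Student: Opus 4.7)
The plan is to derive all three assertions from the coefficients-ideal calculus of Lemma~\ref{diffcenter}: part (ii) will drive the forward direction and the order bound, while part (iii), after $t$ is promoted to a maximal contact parameter for $\cJ$ via Theorem~\ref{centerindlem}(i), will supply the converse. For the order bound, if $\cJ$ is a nontrivial $\cI$-admissible center with multiorder $(d_1\.d_n)$, every point $y\in V(\cJ)$ admits a local presentation $\cJ_y=[\ut^\ud]$; after clearing denominators, the valuation formalism of Lemma~\ref{vallem} shows $(\cJ^N)_y\subseteq m_y^{Nd_1}$ for a suitable $N$, and combining this with $\cI\subseteq\cJ$ applied at $x\in V(\cJ)$ and $\ord_x(\cI)=d$ forces $d_1\le d$.

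For the forward direction of the characterization, I would apply Lemma~\ref{diffcenter}(ii) to $\cJ$ of order $d$ to obtain $\cF^{(\le i)}(\cJ)\subseteq\cD_X^{(\le i)}(\cJ)\subseteq\cJ^{(d-i)/d}$, and then use the monotonicity $\cF^{(\le i)}(\cI)\subseteq\cF^{(\le i)}(\cJ)$ coming from $\cI\subseteq\cJ$. Taking $i=d-1$ together with $t\in\cF^{(\le d-1)}(\cI)$ immediately gives $t^d\in\cJ$, while raising each inclusion to the $d!/(d-i)$-th power and summing---using the identity $(d-i)/d\cdot d!/(d-i)=(d-1)!$---yields $\cC_\cF(\cI)\subseteq\cJ^{(d-1)!}$; restriction to $H$ is then immediate.

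The converse is the substantive direction and proceeds in two stages. First I would promote $t$ to a maximal contact parameter for $\cJ$: the hypothesis $t^d\in\cJ$ gives $\nu_\cJ(t)\ge 1/d$, and Theorem~\ref{centerindlem}(i) applied to a center of order $d$ forces the reverse inequality, so $\nu_\cJ(t)=1/d$ and $t$ is a maximal contact parameter for $\cJ$ at every point of $V(\cJ)$. Second, since $\cI\subseteq\cJ$ can be checked locally, I would fix any $y\in V(\cJ)$ and use Lemma~\ref{sepderlem}(iii) to produce $\partial\in\cF$ on a neighborhood of $y$ with $\partial(t)=1$. For $f\in\cI_y$ and $0\le i\le d-1$, the iterate $\partial^i(f)$ lies in $\cF^{(\le i)}(\cI)$, so $(\partial^i(f))^{d!/(d-i)}\in\cC_\cF(\cI)$; restriction to $H$ together with the hypothesis yields $(\partial^i(f)|_H)^{d!/(d-i)}\in\cJ^{(d-1)!}|_H$, which by power multiplicativity of $\nu_\cJ$ is equivalent to $\partial^i(f)|_H\in\cJ^{(d-i)/d}|_H$. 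Lemma~\ref{diffcenter}(iii) then delivers $f\in\cJ$, as required. The main obstacle I anticipate is purely bookkeeping with $\QQ$-ideal exponents and root extractions; the one genuinely conceptual step is the transfer of the maximal contact property from $\cI$ to $\cJ$ via Theorem~\ref{centerindlem}(i), without which Lemma~\ref{diffcenter}(iii) cannot be invoked.
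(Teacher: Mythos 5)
Your proposal is correct and follows essentially the same route as the paper: the forward direction via Lemma~\ref{diffcenter}(ii) (with $\cI\subseteq\cJ$ giving monotonicity of $\cF^{(\le i)}$), the order bound via $\ord_x(\cI)\ge\ord_x(\cJ)$, and the converse by first upgrading $t$ to a maximal contact for $\cJ$ via Theorem~\ref{centerindlem}(i) and then applying Lemma~\ref{diffcenter}(iii) with a derivation $\partial\in\cF$ satisfying $\partial(t)=1$. The paper's own proof is just a terser version of exactly this argument (it bounds the order in one line and leaves the "upgrade $t$ to a maximal contact for $\cJ$" step implicit), so you have captured the intended proof.
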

\begin{proof}
Of course, $d=\ord_x(\cI)\ge\ord_x(\cJ)$ for any $\cI$-admissible center $\cJ$. Now, assume that $\cJ$ is a center such that $\ord_x(\cJ)=d$ and let us prove the equivalence. Assume first that $\cI\subseteq\cJ$. Then $t\in\cD^{(\le d-1)}(\cI)\subseteq\cD^{(\le d-1)}(\cJ)$ and hence $t^d\in\cJ$ by Lemma~\ref{diffcenter}(i). In addition, $\cC_\cF(\cI)|_H\subseteq\cC_\cF(\cJ)|_H\subseteq\cJ^{(d-1)!}|_H$ by Lemma~\ref{diffcenter}(ii).

Conversely, assume that $t^d\in\cJ$ and $\cC_\cF(\cI)|_H\subseteq\cJ^{(d-1)!}|_H$. Fix $\partial\in\cF$ with $\partial(t)=1$. Let $f\in\cI$ be any element. Then $(\partial^i(f))^{\frac{d!}{d-i}}|_H\in\cC_\cF(\cI)|_H\subseteq\cJ^{(d-1)!}|_H$, whenever $0\le i\le d-1$, hence $\partial^i(f)|_H\in\cJ^{\frac{d-i}d}|_H$, and thus $f\in\cJ$ by Lemma~\ref{diffcenter}(iii).
\end{proof}

\subsection{Canonical centers via maximal contact}

\subsubsection{Maximal and canonical centers}
Given an ideal $\cI$ on a regular scheme $X$, an $\cI$-admissible center $\cJ$ is called {\em maximal} if it has the maximal invariant among all $\cI$-admissible centers: $\inv(\cJ)>\inv(\cJ')$ for any $\cI$-admissible center $\cJ'\neq\cJ$. If, in addition, for any open subscheme $U\subseteq X$ with $U\cap V(\cJ)\neq\emptyset$ one has that $\cJ_U=\cJ|_U$ is the maximal $\cI_U$-admissible center, then we say that $\cJ$ is the {\em canonical} $\cI$-admissible center and use the notation $\cJ=\cJ(\cI)$. Thus, canonical centers are the maximal centers whose maximality is preserved by localizations.

\begin{rem}\label{maxcontactrem}
(i) We stress that the maximality condition in the definition only refers to the multiorder and the support of the centers, but not to the inclusion. For example, if $\cI=(x^2+xy^2)$ on $X=\Spec(k[x,y])$, then both $(x)$ and $[x^2,y^4]=(x^2,xy^2,y^4)$ are maximal $\cI$-admissible centers with respect to the inclusion, but $[x^2,y^4]$ is the maximal center at the origin.

(ii) Maximal centers do not exist in general. A simple example is obtained when the maximality locus of the order is not closed, see \cite[\S2.1.2]{dream_qe}, though there are other examples, see  \cite[Example~2.1.4]{dream_qe}. Nevertheless, we will see that canonical centers exist in a wide range of situations (for all excellent schemes, as will be proved in the sequel paper) and are compatible with arbitrary regular morphisms (even with non-excellent sources). The latter happens because even when $X'$ is pathological (e.g. non-excellent), the ideal $\cI'$ is of a special form.

(iii) In view of (ii), maximality alone implies canonicity on excellent schemes. We do not know if this is so on pathological schemes and instead of studying this (not so interesting) question simply impose the local maximality condition in the definition. It is used to glue local centers together.
\end{rem}

\subsubsection{The synchronization trick}\label{syncrtrick}
If $\cI=(\cI_1,\cI_2)$ is an ideal on a regular scheme $X=X_1\coprod X_2$ and $\cJ_1,\cJ_2$ are the canonical centers of $\cI_1, \cI_2$, then the canonical center of $\cI$ is either $(\cI_1,\cO_{X_2})$, or $(\cO_{X_1},\cI_2)$, or $(\cI_1,\cI_2)$ -- depending on whether $\mord(\cJ_1)>\mord(\cJ_2)$, or $\mord(\cJ_1)<\mord(\cJ_2)$, or $\mord(\cJ_1)=\mord(\cJ_2)$. We call this obvious but useful observation the {\em synchronization trick} because it is a close relative of synchronization in the classical resolution, see \cite[Remark~2.3.4]{non-embedded}.

\subsubsection{Multiorder of an ideal}
If $\cI$ possesses a canonical center $\cJ=\cJ(\cI)$, we define its {\em multiorder} to be $\mord_X(\cI)=\mord(\cJ)$.

\begin{lem}\label{canonicallem}
Assume that $X$ is a regular scheme with an ideal $\cI$ of order $d$ which possesses a canonical center $\cJ$. Then $\cJ^m$ is the canonical center of $\cI^m$ for any $m\in\NN$, and $\cJ^{(d-1)!}$ is the canonical center of $\cC=\cC_X(\cI)$. In particular, $$m\cdot\mord_X(\cI)=\mord_X(\cI^m)\ \  {\rm and}\ \  (d-1)!\cdot\mord_X(\cI)=\mord_X(\cC).$$
\end{lem}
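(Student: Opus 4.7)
My plan is to treat the two claims separately, both times exploiting that raising to a power is a well-behaved operation on centers.

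For the first claim, raising a $\QQ$-ideal to the $m$-th power converts a center $\cK=[\ut^\ue]$ into another center $\cK^m=[\ut^{m\ue}]$, and the formal $m$-th root $[\ut^\ue]\mapsto[\ut^{\ue/m}]$ is well-defined on all centers since positive rational exponents are allowed. By the very definition of inclusion of $\QQ$-ideals, $\cI\subseteq\cK$ if and only if $\cI^m\subseteq\cK^m$, so these mutually inverse operations identify $\cI$-admissible with $\cI^m$-admissible centers. They scale the multiorder by $m$ while preserving the support, hence preserve the lex ordering on invariants, so $\cJ^m$ is the maximal $\cI^m$-admissible center. Universality follows because pullbacks commute with $m$-th powers: for any regular $f\colon X'\to X$, $(\cJ^m)\cO_{X'}=(\cJ\cO_{X'})^m$ is either trivial (when $\cJ\cO_{X'}$ is) or, by rerunning the bijection argument on $X'$, the maximal $\cI^m\cO_{X'}$-admissible center.

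For the second claim, I would deduce the admissibility $\cC\subseteq\cJ^{(d-1)!}$ directly from Lemma~\ref{diffcenter}(ii). The canonical center saturates the order bound $\ord(\cJ)=d$, since admissible centers of order $d$ exist locally (e.g.\ $[\ut^d]$ for a regular system of parameters $\ut$) and the maximal center must match this in the first lex slot. Combined with $\cI\subseteq\cJ$, this gives $\cD_X^{(\le i)}(\cI)\subseteq\cD_X^{(\le i)}(\cJ)\subseteq\cJ^{(d-i)/d}$ for $0\le i\le d-1$; raising to the $d!/(d-i)$-th power yields $(\cD_X^{(\le i)}(\cI))^{d!/(d-i)}\subseteq\cJ^{(d-1)!}$ for each $i$, and summing over $i$ gives $\cC\subseteq\cJ^{(d-1)!}$.

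Maximality of $\cJ^{(d-1)!}$ is then obtained by reduction to the first claim: any $\cC$-admissible center $\cK''$ contains $\cC$ and in particular its $i=0$ summand $\cI^{(d-1)!}$, so $\cK''$ is $\cI^{(d-1)!}$-admissible, and the first claim applied with $m=(d-1)!$ yields $\inv(\cK'')\le\inv(\cJ^{(d-1)!})$. Universality propagates identically: the pullback $(\cJ\cO_{X'})^{(d-1)!}$ along any regular $X'\to X$ is either trivial or, by the first claim on $X'$, the maximal $(\cI\cO_{X'})^{(d-1)!}$-admissible center, hence a fortiori maximal among the smaller class of $\cC\cO_{X'}$-admissible centers. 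I anticipate the only real subtlety to be pinning down the equality $\ord(\cJ)=d$ needed for the derivation estimate; once this is in hand the exponents in the coefficients ideal telescope to exactly $(d-1)!$ by construction.
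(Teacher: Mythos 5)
Your proof is correct and follows the same route as the paper: for the first claim you use the bijection $\cK\mapsto\cK^m$ between $\cI$-admissible and $\cI^m$-admissible centers, and for the second you establish the sandwich $\cI^{(d-1)!}\subseteq\cC\subseteq\cJ^{(d-1)!}$ via Lemma~\ref{diffcenter}(ii) and conclude that $\cJ^{(d-1)!}$ is canonical for both. The only difference is that you spell out the derivation estimate directly and explicitly note the fact $\ord(\cJ)=d$ (which the paper uses silently); both are fine since the paper's $\cC\subseteq\cC_X(\cJ)\subseteq\cJ^{(d-1)!}$ is the same computation packaged in one step.
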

\begin{proof}
The claim about $\cI^m$ follows from the observation that a center is $\cI$-admissible if and only if its $m$-th power is $\cI^m$-admissible. Furthermore, $\cC\subseteq\cC_X(\cJ)\subseteq\cJ^{(d-1)!}$ by Lemma~\ref{diffcenter}(ii), hence $\cJ^{(d-1)!}$ is the canonical center of both ideals $\cI^{(d-1)!}\subseteq\cC$.
\end{proof}

\subsubsection{The multiorder function}\label{mordsec}
If a point $x\in V(\cI)$ lies in an open subscheme $U$ such that $\cI_U$ possesses a canonical center $\cJ_U$ such that $x\in V(\cJ_U)$, then we define the {\em multiorder} of $\cI$ at $x$ to be $\mord_x(\cI)=\mord(\cJ_U)$, and this is independent of the choice of a neighborhood by the canonicity property. If this condition is satisfied for any $x\in V(\cI)$, we say that $\cI$ {\em admits canonical stratification} and obtain a {\em multiorder function} $\mord_\cI\:X\to\cQ$ of $\cI$ which sends $x$ to $\mord_x(\cI)$.

\begin{lem}\label{mordlem}
Assume that $X$ is a regular scheme and an ideal $\cI$ on $X$ possesses a canonical stratification. Then the following assertions hold:

(i) $\cI$ possesses a canonical center $\cJ=\cJ(\cI)$.

(ii) The function $\mord_\cI$ is upper semicontinuous and attains the maximum along $V(\cJ)$. In addition, $$\mord_X(\cI)=\mord(\cJ)=\max_{x\in X}\mord_x(\cI).$$
\end{lem}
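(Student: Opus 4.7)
The plan is to bootstrap a global canonical center from the hypothesized local ones: I would first establish the essentially local part of (ii)---upper semicontinuity of $\mord_\cI$ together with the description of the maximum locus on any open carrying a canonical center---then glue to obtain (i), and finally read off (iii) from canonicity under regular pullback.

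The key local claim for (ii) is that on any open $U \subseteq X$ equipped with a canonical center $\cJ_U$, the function $\mord_\cI$ attains its maximum value $\mord(\cJ_U)$ precisely on $V(\cJ_U) \cap U$ and is strictly smaller at the remaining points of $U \cap V(\cI)$. Equality on $V(\cJ_U)$ is the definition, so the content is the strict drop. I would argue by contradiction: suppose $y \in U \setminus V(\cJ_U)$ satisfies $\mord_y(\cI) \ge \mord(\cJ_U)$, witnessed by a canonical center $\cJ_{U'}$ on a neighborhood $U' \ni y$. On $W = U \cap U'$, canonicity of $\cJ_U$ forces $\cJ_U|_W$ to be canonical or trivial for $\cI|_W$; the restriction $\cJ_{U'}|_W$, being non-trivial at $y$, is the canonical center on $W$ with multiorder $\mord(\cJ_{U'})$. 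If $\cJ_U|_W$ is canonical, uniqueness of the canonical center forces $\cJ_U|_W = \cJ_{U'}|_W$ and in particular $y \in V(\cJ_U)$, a contradiction. If $\cJ_U|_W$ is trivial, one shrinks $U'$ inside $U$ until $V(\cJ_{U'}) \cap W$ is closed in $U$, and then glues $\cJ_{U'}|_W$ with the trivial center on $U \setminus V(\cJ_{U'})$ and with $\cJ_U$ on $U \setminus W$ (the supports $V(\cJ_U)$ and $V(\cJ_{U'}) \cap W$ are disjoint since $V(\cJ_U) \cap W = \emptyset$) to produce an $\cI|_U$-admissible center on $U$ with the same multiorder as $\cJ_U$ but strictly larger support, or with strictly larger multiorder---in either case contradicting maximality of $\cJ_U$.

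For (i) and the remaining claims of (ii), cover $V(\cI)$ by finitely many $U_i$ (by noetherianness) carrying canonical centers $\cJ_i$ of multiorders $\ud_i$, and set $\ud_0 = \max_i \ud_i$. By the local claim, $\ud_0 = \max_{x} \mord_x(\cI)$ and the set $Z = \{\mord_\cI = \ud_0\}$ is closed, being locally $V(\cJ_i)$ on each $U_i$ with $\ud_i = \ud_0$ and empty on the others. The centers $\cJ_i$ with $\ud_i = \ud_0$ agree on pairwise overlaps: on $U_i \cap U_j$ each restriction is canonical or trivial, and non-triviality of either forces both to be canonical with the same multiorder $\ud_0$ hence equal by uniqueness, while if both are trivial they tautologically agree. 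They glue to a $\QQ$-ideal on $\bigcup_{\ud_i = \ud_0} U_i$, which extends by the trivial center to a global center $\cJ$ on $X$ with $V(\cJ) = Z$ and $\mord(\cJ) = \ud_0$. Admissibility of $\cJ$ is local and is inherited from the $\cJ_i$; maximality follows from the local claim, since any competing $\cI$-admissible center has multiorder bounded by $\ud_0$, and equality of invariants forces $V(\cJ') \subseteq V(\cJ)$ and then $\cJ' = \cJ$ via Lemma~\ref{subcenter} combined with local uniqueness of canonical centers; universality under regular pullback is inherited from each $\cJ_i$.

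For (iii), a regular morphism $f \: X' \to X$ pulls any local canonical center $\cJ_U$ of $\cI$ back to $\cJ_U \cO_{f^{-1}(U)}$, which is canonical or trivial for $\cI'|_{f^{-1}(U)}$ by the very definition of canonicity. At $y' \in V(\cI')$ with $x = f(y') \in V(\cI)$, choose $U$ with $x \in V(\cJ_U)$; then $y'$ lies in the support of the pullback, which is therefore non-trivial and hence canonical. This furnishes the canonical stratification of $\cI'$ and gives $\mord_{y'}(\cI') = \mord(\cJ_U \cO_{f^{-1}(U)}) = \mord(\cJ_U) = \mord_x(\cI)$, proving $\mord_{\cI'} = \mord_\cI \circ f$. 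The main technical obstacle, I expect, is the contradiction argument for the strict drop in the first step: realizing the ``dominating'' restriction $\cJ_{U'}|_W$ as a genuine $\cI|_U$-admissible center on the whole of $U$ requires arranging that $V(\cJ_{U'}) \cap W$ is closed in $U$, which should be achievable by shrinking $U'$ using mild separatedness, after which the synchronization trick of \S\ref{syncrtrick} provides the needed extension.
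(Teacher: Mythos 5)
Your overall strategy differs from the paper's: the paper first constructs the global canonical center (i) by descent from the disjoint union $X'=\coprod X_i$ and the synchronization trick, and only then derives the semicontinuity statements in (ii) using that global center; you instead try to establish a ``local claim'' version of (ii) first and then glue over genuine open subsets of $X$ to get (i). Part (iii) and the gluing for (i) are fine in spirit, but the proof of your local claim has a genuine gap.

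The problem is in Case 2, where $\cJ_U|_W$ is trivial and you want to promote $\cJ_{U'}|_W$ to an $\cI|_U$-admissible center on all of $U$. This requires $V(\cJ_{U'})\cap W$ to be closed in $U$, but it is only locally closed there, and its closure in $U$ can perfectly well leave $W$: the point $y$ may fail to be closed and $\overline{\{y\}}^U$ may meet $U\setminus W$. You propose to ``shrink $U'$ using mild separatedness'' to fix this, but no amount of shrinking $U'$ (while retaining $y$) makes a locally closed set closed -- ruling out that the closure exits $W$ would already require upper semicontinuity of $\mord_\cI$, which is exactly what you are trying to prove, so the argument is circular. The standard way to sidestep the topology is exactly the synchronization trick you cite at the end but never actually deploy: instead of gluing on $U$, pass to the disjoint union $U\coprod U'$. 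There $(\cJ_U,\cO_{U'})$ is the pullback of $\cJ_U$, hence (being non-trivial) the canonical center of $(\cI|_U,\cI|_{U'})$ by universality; if $\mord(\cJ_{U'})\ge\mord(\cJ_U)$ then $(\cJ_U,\cJ_{U'})$ (when the multiorders agree) or $(\cO_U,\cJ_{U'})$ (when $\mord(\cJ_{U'})>\mord(\cJ_U)$) is an admissible center of strictly larger invariant, contradicting maximality with no closure issues at all. This is the paper's argument for (ii), and your Case 1 and the rest of your outline then go through.
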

\begin{proof}
(i) Choose an open cover $X=\cup_{i=1}^nX_i$ such that each $\cI_i=\cI|_{X_i}$ possesses a canonical center $\cJ_i$. Let $\ud=\max_i(\mord_{X_i}(\cJ_i))$, $X'=\coprod_{i=1}^nX_i$ and $\cI'=\cI\cO_{X'}$. By the synchronization trick, $\cI'$ possesses a canonical center, which is obtained by keeping all $\cJ_i$ with $\mord(\cJ_i)=\ud$ and replacing each other $\cJ_i$ by $\cO_{X_i}$. Its pullbacks to $X'\times_XX'$ coincide and hence $\cJ'$ descends to an $\cI$-admissible center $\cJ$ whose maximality and universality are checked by descent as well.

(ii) Of course, $\mord_x(\cI)=\ud$ for $x\in V(\cJ)$. Assume that $y\notin V(\cJ)$ and let $U$ be an open neighborhood of $y$ in $X\setminus V(\cJ)$ such that $\cI_U=\cI|_U$ possesses a canonical center $\cJ_U$ and $y\in V(\cJ_U)$. Since $(\cJ,\cO_U)$ is the pullback of $\cJ$ to $X\coprod U$, we have that $\ud=\mord(\cJ)>\mord(\cJ_U)=\ord_y(\cI)$. Thus, the closed set $V(\cJ)$ is the maximality locus of $\mord_\cI$. In addition, any $y\notin V(\cJ)$ has a neighborhood $U$ as above and in this neighborhood $y$ lies in the closed set $V(\cJ_U)$, which is the maximality locus of $\mord_{\cI_U}=(\mord_\cI)|_U$. So, $\mord_\cI$ is upper semicontinuous.
\end{proof}

\begin{rem}
The canonical stratification alluded to in the terminology consists of the canonical centers $\cJ_i=\cJ(\cI|_{X_i})$ on the open subschemes $X_i=X_{i-1}\setminus V(\cJ_{i-1})$, where $X_0=X$. It follows from the lemma that it exists, provides a stratification of $V(\cI)$ and is compatible with localizations.
\end{rem}

\subsubsection{Maximal contacts flag}
By a {\em regular flag} of {\em length} $n$ in $X$ we mean a sequence $H_0\.H_n$ such that $H_0=X$ and $H_{i+1}$ is a regular divisor in $H_i$. Given an ideal $\cI$ and a regular flag we define the {\em restricted coefficients ideals} $\cI_i\subseteq\cO_{H_i}$ via the following recursive definition: $\cI_0=\cI$ and $\cI_{i+1}=\cC(\cI_i)|_{H_{i+1}}$ for $0\le i\le n-1$.

We will only need the above construction in conjunction with the maximal contact property. Namely, we say that a regular flag $H_0\.H_n$ is a {\em maximal contacts flag} to $\cI$ if each $H_i$ with $1\le i\le n$ is a maximal contact to $\cI_{i-1}$ and $\cI_n=0$. In the same manner a family $t_1\.t_n\in\cO_X$ is called a {\em maximal contacts flag} to $\cI$ if the induced flag of subschemes $H_i=V(t_1\.t_i)$ is a maximal contacts flag to $\cI$.

\begin{lem}\label{flagexistlem}
If $X$ has enough derivations, $\cI$ is an ideal on $X$ and $x\in V(\cI)$ is a point, then there exists a neighborhood $U$ of $x$ such that $\cI|_U$ possesses a maximal contacts flag $H_0\.H_n$ and $x\in H_n$.
\end{lem}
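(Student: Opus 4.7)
The plan is to induct on $d_X:=\dim_x(X)$, the Krull dimension of $\cO_{X,x}$, reducing dimension by restriction to a maximal contact hypersurface. In the base case $d_X=0$ the local ring $\cO_{X,x}$ is a field, so the proper ideal $\cI_x$ must vanish; since $X$ is noetherian and $\cI$ coherent, $\cI$ vanishes on an open neighborhood of $x$ and the empty flag $n=0$ works. The same empty flag handles the case in which $\cI$ vanishes near $x$ more generally, so in the inductive step I may assume $\cI_x\neq 0$, whence $d:=\ord_x(\cI)$ is a positive integer.

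To produce the first maximal contact parameter I will apply Lemma~\ref{derlogord}(i) with $\cF=\cD_X$ (which is separating since $X$ has enough derivations), yielding $\cD_X^{(\le d)}(\cI_x)=\cO_{X,x}$ and $\cD_X^{(\le d-1)}(\cI_x)\subseteq m_x$. A one-line observation shows that $\cD_X^{(\le d-1)}(\cI_x)\not\subseteq m_x^2$: otherwise one further derivation would keep it in $m_x$, contradicting $\cD_X^{(\le d)}(\cI_x)=\cO_{X,x}$. Hence I can choose $t_1\in\cD_X^{(\le d-1)}(\cI_x)$ representing a regular parameter at $x$. After shrinking $X$ around $x$ I may assume $t_1\in\cO_X(X)$, that $H_1=V(t_1)$ is a regular hypersurface, and (by Lemma~\ref{localcontact}) that $H_1$ is a maximal contact to $\cI$ throughout $X$. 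Setting $\cI_1:=\cC_X(\cI)|_{H_1}$, the inclusions $\cD_X^{(\le i)}(\cI_x)\subseteq m_x$ for $0\le i\le d-1$ force each summand of $\cC_X(\cI)$ to lie in $m_x$ at $x$, so $x\in V(\cI_1)$.

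The hypersurface $H_1$ is regular of codimension one and, being locally cut in $X$ by a single equation, of finite type over $X$; thus Lemma~\ref{minifoldlem} supplies enough derivations on $H_1$, with $\dim_x(H_1)=d_X-1$. Applying the inductive hypothesis to $(H_1,\cI_1,x)$ yields, on some open $V\subseteq H_1$ containing $x$, a maximal contacts flag $V=H'_0\supset H'_1\supset\cdots\supset H'_m$ for $\cI_1|_V$. I then pick $W\subseteq X$ open with $W\cap H_1=V$ (possible since $H_1\setminus V$ is closed in $X$); noting that the $j$-th restricted coefficients ideal along the $H_1$-flag coincides with the $(j+1)$-th one along the candidate $X$-flag, the sequence $W\supset V\supset H'_1\supset\cdots\supset H'_m$ is a maximal contacts flag of length $m+1$ for $\cI|_W$ with terminal ideal $0$. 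The main subtlety I anticipate is the open-set bookkeeping in this gluing step and the compatibility of the two coefficients-ideal constructions; neither is a genuine obstacle given the maximal contact and coefficients ideal machinery already in place.
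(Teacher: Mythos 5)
Your proposal is correct and follows essentially the same route as the paper's proof: induction on dimension, construction of a local maximal contact $t_1\in\cD_X^{(\le d-1)}(\cI_x)$, restriction of the coefficients ideal to $H_1=V(t_1)$, and application of the inductive hypothesis on $H_1$. The paper's proof is a one-line sketch; you supply the supporting details (base case, existence of a regular parameter in $\cD_X^{(\le d-1)}(\cI_x)$ via the observation that it cannot sit inside $m_x^2$, propagation of enough derivations to $H_1$ by Lemma~\ref{minifoldlem}, and the open-set bookkeeping), all of which are sound.
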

\begin{proof}
One uses induction on the dimension. If $\cI_x\neq 0$, then locally at $x$ there exists a maximal contact element $t\in\cD^{(\le d-1)}(\cI)$ and one can take $t_1=t$, $H_1=V(t_1)$, $\cI_1=\cC(\cI)|_H$ and apply the induction assumption to $H_1$ and $\cI_1$.
\end{proof}

\subsubsection{The associated center and invariant}
Given a maximal contacts flag $\ut=(t_1\.t_n)$ with restricted coefficients ideals $\cI_i\subseteq\cO_{H_i}$ we define {\em the associated center} $\cJ=\cJ_\ut(\cI)$ with the {\em associated invariant} $\inv_\ut(\cI)=\inv(\cJ)$ as follows. For $1\le i\le n$ let $e_i=\ord(\cI_{i-1})$ and $d_i=e_i/\prod_{j=1}^{i-1}(e_j-1)!$, and set $\cJ=[\ut^\ud]$ and $\mord_\ut(\cI)=\mord(\cJ)=\ud$. Since the degrees $e_i$ are integral, the tuple $\ud$ of normalized orders lies in the subset $\cQ_1\subset\cQ$ consisting of tuples $(d_1\.d_n)$ such that the numbers $e_i$ iteratively defined by $e_i=d_i\prod_{j=1}^{i-1}(e_j-1)!$ are integral (in particular, $d_1=e_1$ is integral). Note that the set $\cQ_1$ is well ordered due to a bound on the denominator of each $d_i$ which only depends on $d_1\.d_{i-1}$.

The iterative nature of our construction is encoded in the following result, which will be useful for inductive proofs and whose proof is just unravelling of definitions.

\begin{lem}\label{indlem}
Keep the above notation, then $\cJ_\ut(\cI)=[t_1^{d_1},\cJ_1^{1/(d_1-1)!}]$, where $\cJ_1=\cJ_{(t_2\.t_d)}(\cI_1)$.
\end{lem}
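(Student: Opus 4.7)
The plan is to verify the equality by direct unwinding of the recursive structure of the construction. First I would make the key observation that the restricted coefficients ideals for the subflag $(t_2,\dots,t_n)$ on $H_1$, starting from the ideal $\cI_1$, coincide with the tail of the original ones. Indeed, writing $\cI'_j \subseteq \cO_{H_{j+1}}$ for the restricted coefficients ideals of $\cI_1$ under this shorter flag, an immediate induction on $j$ from the recursion $\cI'_0 = \cI_1$, $\cI'_{j+1} = \cC(\cI'_j)|_{H_{j+2}}$ yields $\cI'_j = \cI_{j+1}$. Consequently the orders shift as $e'_j := \ord(\cI'_{j-1}) = \ord(\cI_j) = e_{j+1}$, and the normalized exponents for the subflag are
$$d'_j \;=\; \frac{e'_j}{\prod_{k=1}^{j-1}(e'_k-1)!} \;=\; \frac{e_{j+1}}{\prod_{k=1}^{j-1}(e_{k+1}-1)!}.$$

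Next I would perform the arithmetic comparison with the exponents $d_{j+1}$ from the original flag. Since $d_1 = e_1$ (empty product in the denominator), we have $(d_1-1)! = (e_1-1)!$, so
$$\frac{d'_j}{(d_1-1)!} \;=\; \frac{e_{j+1}}{(e_1-1)!\prod_{k=1}^{j-1}(e_{k+1}-1)!} \;=\; \frac{e_{j+1}}{\prod_{m=1}^{j}(e_m-1)!} \;=\; d_{j+1}.$$
This is the numerical heart of the lemma: the re-normalization by $1/(d_1-1)!$ is precisely what converts the ``local'' exponents $d'_j$ of $\cJ_1$ into the ``global'' exponents $d_{j+1}$ of $\cJ_\ut(\cI)$.

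Finally I would translate the arithmetic identity back to $\QQ$-ideals using the formalism of \S\ref{Qsec}. By definition $\cJ_1 = [t_2^{d'_1},\dots,t_n^{d'_{n-1}}]$, and rational powers commute with sums of monomial $\QQ$-ideals via $[t_i^{a}]^{1/m} = [t_i^{a/m}]$, so
$$\cJ_1^{1/(d_1-1)!} \;=\; \bigl[t_2^{d'_1/(d_1-1)!},\dots,t_n^{d'_{n-1}/(d_1-1)!}\bigr] \;=\; [t_2^{d_2},\dots,t_n^{d_n}].$$
Summing with $[t_1^{d_1}]$ then gives $[t_1^{d_1},\cJ_1^{1/(d_1-1)!}] = [t_1^{d_1},t_2^{d_2},\dots,t_n^{d_n}] = \cJ_\ut(\cI)$, as required.

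There is essentially no obstacle: the only point requiring a moment's care is confirming that the rational-power and sum operations on $\QQ$-ideals behave on monomial presentations exactly as their notation suggests, which is immediate from the definitions in \S\ref{Qsec}.
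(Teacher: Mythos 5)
Your proof is correct and is exactly the unwinding of definitions the paper has in mind (it explicitly declines to give a proof, remarking that it is "just unravelling of definitions"). The key observations — that the restricted coefficients ideals for the subflag $(t_2,\dots,t_n)$ starting from $\cI_1$ coincide with the shifted sequence $\cI_{j+1}$, that the normalization factors satisfy $d'_j/(d_1-1)! = d_{j+1}$ because the empty product gives $d_1=e_1$, and that rational powers and sums of monomial $\QQ$-ideals behave as the notation suggests — are all verified correctly and together constitute the complete proof.
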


\begin{rem}\label{wellordered}
(o) Note that $d_{n+1}=\infty$ because $\cI_n=0$, fitting our convention for the full notation $(d_1\.d_n,\infty)$ for tuples. This makes an even deeper sense in the logarithmic analogues, e.g. see \cite{Quek}, where $\cI_n$ is a monomial ideal, so its log order is infinite, but $\cI_n$ is not necessarily zero.

(i) One can remove the annoying scaling factors in the lemma by working with the iterated restrictions of the normalized coefficients ideal $[\cC(\cI)]^{1/(d-1)!}$. On the one hand, this is more natural, but on the other hand, in order to use the differential calculus one will have to raise them to appropriate powers, so it seems that this does not give anything beyond minor notational convenience.
\end{rem}

\subsubsection{Independence of the maximal contact}
Now we can establish the first main result of the theory -- existence and functoriality of the canonical center, when there are enough derivations. It is locally constructed as the associated center and the main ingredient is to prove independence of maximal contacts. We formulate the theorem in the generality of stacks, so note that the definitions of canonical centers and multiorder function straightforwardly extend to ideals on regular stacks. Also, we say that a regular stack {\em has enough derivations} if it possesses a smooth cover by a scheme with enough derivations.

\begin{theor}\label{localcanonicalcenter}
Assume that $X$ is regular stack with enough derivations and $\cI$ is an ideal on $X$, then

(i) The canonical center $\cJ=\cJ(\cI)$ exists and, moreover, $\cI$ possesses a canonical stratification.

(ii) Let $f\:X'\to X$ be a regular morphism. If $f(X')\cap V(\cJ)\neq\emptyset$, then $\cJ'=\cJ\cO_{X'}$ is the canonical center of $\cI'=\cI\cO_{X'}$. Moreover, in the general case one has that $\mord_{\cI'}=\mord_\cI\circ f$ and the canonical stratification of $\cI'$ is the pullback of the canonical stratification of $\cI$.

(iii) If $X$ is a scheme and $\cI$ possesses a maximal contacts flag $\ut=(t_1\.t_n)$, then the associated center is the canonical one: $\cJ=\cJ_\ut(\cI)$.
\end{theor}
\begin{proof}
First, we note that it suffices to prove all assertions of the theorem for schemes, because then the case of stacks follows from the functoriality with respect to smooth morphisms asserted in (ii). So, in the sequel $X$ and $X'$ are assumed to be schemes.

(i) We claim that this follows from (iii). Indeed, by Lemma~\ref{flagexistlem} any $x\in V(\cJ)$ possesses a neighborhood $U$ such that $\cI_U$ possesses a maximal contacts flag at $x$, and then (iii) implies that the canonical center of $\cI_U$ exists and its support contains $x$.

(iii) By induction we can assume that the assertion holds true when the length of the maximal contacts flag does not exceed $n-1$, and we will see that the induction step is essentially Lemma~\ref{restrlem}. Set $t=t_1$, $H=H_1$, $\cJ_\ut(\cI)=[\ut^\ud]$, $V=V(\cJ_\ut(\cI))$ and $\ut'=(t_2\.t_n)$, $\ud'=(d_2\.d_n)$. Also, set $d=d_1$ and recall that $d=\ord(\cI)=\ord(\cJ_\ut(\cI))$.

It suffices to prove that $\cJ_\ut(\cI)$ is the maximal $\cI$-admissible center because then the same argument applies to the restriction of $\cI$ and $\cJ_\ut(\cI)$ onto any open $U$ such that $U\cap V\neq\emptyset$. Assume that $\cJ$ is $\cI$-admissible and $\inv(\cJ)\nless(\ud,V)$. We will show that $\cJ=\cJ_\ut(\cI)$ by restricting to $H$, using the induction assumption there and lifting the equality on $H$ by Lemma~\ref{indlem}. Note that $\ord(\cJ)=d$, and hence by Lemma~\ref{restrlem} $t$ is a maximal contact to $\cJ$ and $\cI_1=\cC(\cI)|_H\subseteq\cJ_H^{(d-1)!}$, where we set $\cJ_H=\cJ|_H$. In addition, $\inv(\cJ)=(d,\inv(\cJ_H))$, so $\inv(\cJ_H)\nless(\ud',V|_H)$. Recall that $\cJ_{\ut'}(\cI_1)=[\ut'^{\ud'}]^{(d-1)!}$ by Lemma~\ref{indlem}, and hence $[\ut'^{\ud'}]^{(d-1)!}=\cJ_H^{(d-1)!}$ by the induction assumption applied to $\cI_1$. Therefore $[\ut'^{\ud'}]=\cJ_H$, and by Theorem~\ref{centerindlem}(ii) we necessarily have that $\cJ_\ut(\cI)=[\ut^\ud]=\cJ$.

It remains to show that $\cJ_\ut(\cI)$ itself is $\cI$-admissible. Note that $$\cJ_\ut(\cI)^{(d-1)!}|_H=[\ut^\ud]^{(d-1)!}|_H=[\ut'^{\ud'}]^{(d-1)!}=\cJ_{\ut'}(\cI_1)$$ is $\cI_1$-admissible by the induction assumption, and hence $\cJ_\ut(\cI)$ is $\cI$-admissible by Lemma~\ref{restrlem}.

(ii) It suffices to check that $\cJ'$ is the canonical center of $\cI'$, as the second claim is an easy consequence. Assume first that $X'$ has enough derivations and $f$ lifts derivations. Then the claim follows from the functoriality of all ingredients: maximal contacts and coefficients ideals. Namely, $\ut$ pulls back to a maximal contacts flag $\ut'$ and $\cJ$ pulls back to $\cJ_{\ut'}(\cI')$, which is maximal $\cI'$-admissible by Step 1.

Assume now that $f$ is an arbitrary regular morphism such that $f^{-1}(V(\cJ))\neq\emptyset$, but the claim  fails. Shrinking $X'$ we can assume that $\cJ'$ is not maximal, so let $\cJ''$ be another $\cI'$-admissible center such that $\inv(\cJ'')\nless\inv(\cJ')$. Choose a point $y'\in X'$ such that $\cJ'_{y'}\neq\cJ''_{y'}$. Then $\nu_{\cJ',y'}\neq\nu_{\cJ'',y'}$, and hence the induced centers $\hatcJ'_{y'}$ and $\hatcJ''_{y'}$ on the completion $\hatX'_{y'}=\Spec(\hatcO_{X',y'})$ are different. Choose $x'\in V(\cJ')$ for synchronization and set $\hatX'=\hatX'_{y'}\coprod\hatX'_{x'}$, $\hatcI'=\cI\cO_{\hatX'}$, $\hatcJ'=\cJ'\cO_{\hatX'}$ and $\hatcJ''=\cJ''\cO_{\hatX'}$. Since $\inv(\hatcJ'')\nless\inv(\hatcJ')$ and $\hatcJ'\neq\hatcJ''$ the $\hatcI'$-admissible center $\hatcJ'$ is not maximal.

On the other hand, set $x=f(x')$, $y=f(y')$ and $\hatX=\hatX_y\coprod\hatX_x$. Then $\hatcJ'$ is also the pullback of $\cJ$ along the composition $\hatX'\to\hatX\to X$. Even without checking that the morphism $\hatX\to X$ is regular (which is true but non-obvious), we do know that it lifts derivations by Example~\ref{liftexam}(iii), hence $\ut$ is also the maximal contact to $\hatcI=\cI\cO_{\hatX}$, and the non-trivial (thanks to the choice of $x'$) center $\hatcJ=\cJ\cO_{\hatX}=\cJ_\ut(\hatcI)$ is maximal $\hatcI$-admissible by Step 1. Furthermore, the morphism $\hatX'\to\hatX$ is regular by Lemma~\ref{completelem} and it lifts derivations by Example~\ref{liftexam}(iii). Therefore $\hatcJ'=\hatcJ\cO_{\hatX'}$ is maximal $\hatcI'$-admissible by the first paragraph of Step 2, and the contradiction with the conclusion of the previous paragraph concludes the proof.
\end{proof}

\subsection{Canonical blowing up}
Our next major goal is to prove that the invariant drops after preforming a regularized root blowing up of the canonical center and transforming the ideal.

\subsubsection{Transforms}
If $\cJ$ is an $\cI$-admissible center and $f\:X'\to X$ is a root blowing up along $\cJ$, then by the {\em transform} of $\cI$ under $f$ we mean the pullback of $\cI$ divided by the pullback of $\cJ$ (which is an invertible ideal) and use the notation $\cI'=(\cI\cO_{X'})(\cJ\cO_{X'})^{-1}$. Derivations and coefficients ideal of $\cI$ should be transformed with appropriate powers of $\cJ$, so to avoid confusions we will specify them in the notation. The following lemma is a standard basic computation in any modern resolution argument, but since there is a new subtlety with root blowings up we provide all details.

\begin{lem}\label{coefftransf}
Let $X$ be a regular stack, $\cI$ an ideal on $X$ of order $d$ and $\cJ$ an $\cI$-admissible center of multiorder $(d_1\.d_n)$ such that $d=d_1$. Assume that $N>0$ is a natural number such that $w_i=N/d_i\in\NN$ for $1\le i\le n$ and $X'=\NBl_\cJ(X)$. Then

(i) $\cJ'^{-(d-1)!}\cC_X(\cI)\subseteq\cC_{X'}(\cI')$, where $\cJ'=\cJ\cO_{X'}$.

(ii) If $H$ is a maximal contact to $\cI$ and $\ord(\cI')\ge d$, then $\ord(\cI')=d$ and the strict transform $H'$ of $H$ is a maximal contact to $\cI'$.
\end{lem}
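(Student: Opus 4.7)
The plan is to derive both statements from a single chart-wise sublemma describing how derivations of $\cI$ transform under the weighted root blowup. First I would choose a convenient presentation of $\cJ$: by Lemma~\ref{restrlem}, any $\cI$-admissible $\cJ$ of order $d$ must satisfy $t^d \in \cJ$ at every point where $H = V(t)$ is a local maximal contact, so $\nu_\cJ(t) \ge 1/d = 1/d_1$, and Theorem~\ref{centerindlem}(i) lets me locally pick a presentation $\cJ = [t_1^{d_1}, \dots, t_n^{d_n}]$ with $t_1 = t$. Then on the chart $X'_i$ of Example~\ref{chartexam}, with coordinate $s_i = t_i^{1/w_i}$, one has $t_i = s_i^{w_i}$, $t_j = s_i^{w_j} u_{j,i}$ for $j \ne i$ (with $u_{j,i} = t_j/s_i^{w_j}$), $\cJ \cO_{X'_i} = (s_i^N)$, and $\cI \cO_{X'_i} = (s_i^N)\,\cI'$.

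The central sublemma, which I would prove by induction on $i$, is
\[ \cD_X^{(\le i)}(\cI)\,\cO_{X'} \subseteq \cJ'^{(d-i)/d} \cdot \cD_{X'}^{(\le i)}(\cI'), \qquad 0 \le i \le d. \]
The base case $i = 0$ is the equality $\cI \cO_{X'} = \cJ' \cdot \cI'$. For the inductive step, a derivation $\partial_{t_k}$ on $X$, expressed in the chart coordinates $(s_i, u_{j,i})$, takes the form $s_i^{-w_k}\,D_k$ for a regular derivation $D_k$ on $X'_i$ (for instance $D_1 = (s_1 \partial_{s_1} - \sum_{j \ne 1} w_j u_{j,1} \partial_{u_{j,1}})/w_1$ on chart~$1$). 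Applying $\partial_{t_k}$ to an element of $s_i^{w_1(d-i+1)} \cdot \cD_{X'}^{(\le i-1)}(\cI')$ and expanding via Leibniz produces two terms, each sitting in $s_i^{w_1(d-i)} \cdot \cD_{X'}^{(\le i)}(\cI') = \cJ'^{(d-i)/d} \cdot \cD_{X'}^{(\le i)}(\cI')$; here the inequality $w_k \le w_1$ (which holds because $d_1 \le d_k$) is what keeps the pole bound uniform in $k$. Raising the sublemma to the power $d!/(d-i)$ and summing over $0 \le i \le d-1$ yields $\cC_X(\cI)\,\cO_{X'} \subseteq \cJ'^{(d-1)!} \cdot \cC_{X'}(\cI')$, which is part~(i).

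For part~(ii), the maximal contact hypothesis says $t_1 \in \cD_X^{(\le d-1)}(\cI)$ at every point of maximal order of $\cI$. Specializing the sublemma to $i = d-1$ gives $t_1 \cO_{X'} \in (s_i^{w_1}) \cdot \cD_{X'}^{(\le d-1)}(\cI')$ on $X'_i$; cancelling the nonzerodivisor $s_i^{w_1}$ in $t_1 = s_i^{w_1} u_{1,i}$ produces $u_{1,i} \in \cD_{X'}^{(\le d-1)}(\cI')$ on every chart. On chart~$1$, $u_{1,1} = 1$ is a unit, forcing $\ord(\cI'|_{X'_1}) \le d-1$; on chart~$i \ne 1$, applying one more regular derivation $\partial_{u_{1,i}}$ gives $1 \in \cD_{X'}^{(\le d)}(\cI')$, so $\ord(\cI'|_{X'_i}) \le d$. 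Together with the hypothesis $\ord(\cI') \ge d$ this pins down $\ord(\cI') = d$. At any point $y$ of maximal order, $u_{1,i}(y) \in m_y$ (otherwise $u_{1,i}$ is a unit and the order drops below $d$), so $y$ lies on the strict transform $H' = V(u_{1,i})$, and $u_{1,i}$ being a parameter belonging to $\cD_{X'}^{(\le d-1)}(\cI'_y)$ exhibits $H'$ as a maximal contact to $\cI'$.

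The main technical obstacle will be the pole-order bookkeeping in the inductive step of the sublemma: each lifted derivation acquires a pole of order at most $w_1$ along the exceptional divisor, and the factor $\cJ'^{(d-i)/d}$ is precisely what absorbs this pole after the Leibniz expansion, with the bound $w_k \le w_1$ needed to treat all directions uniformly. Once the sublemma is in place, both parts are a fairly mechanical unravelling of the chart description combined with the standard maximal contact formalism of Lemmas~\ref{diffcenter} and~\ref{restrlem}.
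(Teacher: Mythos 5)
Your proposal is correct and follows essentially the same route as the paper's proof. The sublemma $\cD_X^{(\le i)}(\cI)\cO_{X'} \subseteq \cJ'^{(d-i)/d}\cD_{X'}^{(\le i)}(\cI')$ is exactly the paper's inclusion $\cJ'^{-1+i/d}\cD_X^{\le i}(\cI) \subseteq \cD_{X'}^{\le i}(\cI')$ rearranged, the chart computation of the pole order of the lifted derivations (with the crucial bound $w_k\le w_1$, equivalently $d_1\le d_k$) is the same as the paper's evaluation of $\partial$ on $s_i$ and $t'_j$, and the deduction of (i) by raising to the power $d!/(d-i)$ and summing, and of (ii) by the case $i=d-1$ and inspecting $u_{1,i}=t_1/s_i^{w_1}$ chart by chart, matches the paper's argument.
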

\begin{proof}
Note that $\cJ'^{1/d}=(\cJ'^{1/N})^{w_1}$ is an invertible ideal on $X'$. The main ingredient of the proof is that any derivation $\partial\in\cD_X$ induces a meromorphic derivation on $X'$ which may have a pole along the exceptional divisor $V(\cJ')$, but it is bounded by $\cJ'^{1/d}$, that is, $\cJ'^{1/d}\cD^{\le 1}_X\subseteq\cD_{X'}^{\le 1}$.

This claim can be checked locally, so we assume that $\cJ=[\ut^\ud]$ and work on the $i$-th chart $X'_i$ as described in Example~\ref{chartexam}. In (ii) we can also assume that $H=V(t)$. It suffices to evaluate $\partial$ on the $\cO_X$-generators $s_i=t_i^{1/w_i}$ and $t'_j=t_j/s_i^{w_j}$ with $1\le j\le n$ of $\cO_{X'_i}$, and we have that $$\partial(s_i)=\frac{1}{w_i}s_i^{1-w_i}\partial(t_i)\in\cJ'^{-1/d},$$ $$\partial(t'_j)=s_i^{-w_j}\partial(t_j)-\frac{w_j}{w_i}s_i^{-w_i}t'_j\partial(t_i)\in\cJ'^{-1/d}$$
because $\cJ'|_{X'_i}=(t_i^{d_i})=(s_i^N)$ and $N/d\ge N/d_j=w_j$ for $1\le j\le n$.

The rest is a formal computation with differential operators. The commutation rule $[\partial,f]=\partial(f)$ of differential operators $\partial\in\cD_{X'}$ and $f\in\cO_X$ implies that $\cJ'^{\frac id}\cD^{\le i}_X\subseteq\cD_{X'}^{\le i}$ for any $0\le i\le d-1$ and hence $\cJ'^{-1+\frac id}\cD_X^{\le i}(\cI)\subseteq\cD_{X'}^{\le i}(\cI')$. Claim (i) follows by raising both sides to appropriate powers and summing up. By Lemma~\ref{strictlem} locally on $X$ and $X'$ we have that $H=V(t)$ and $H'=V(t')$, where $t\in\cD_X^{\le d-1}(\cI)$ and $(t)=\cJ'^{1/d}(t')$. But then $t'\in\cJ'^{-1/d}\cD_X^{\le d-1}(\cI)\subseteq\cD_{X'}^{\le d-1}(\cI')$, and this implies the assertion of (ii).
\end{proof}

\subsubsection{The invariant drops}
Now we can prove the second main theorem of the theory of canonical centers. Again, this reduces to a simple inductive computation.

\begin{theor}\label{dropth}
Assume that $X$ is a regular stack with enough derivations and $\cI$ is an ideal on $X$ with nowhere dense non-empty $V(\cI)$. Let $\cJ=\cJ(\cI)$ be the canonical center of $\cI$, let $\ud=(d_1\.d_n)=\mord_X(\cI)$, let $N>0$ be a natural number such that $N/d_i\in\NN$ for $1\le i\le n$, and let $f\:X'=\Bl_{\cJ^{1/N}}(X)\to X$ be the corresponding normalized root blowing up of $X$ along $\cJ$ with the transform $\cI'$ of $\cI$. Then $\mord_{X'}(\cI')<\ud$.
\end{theor}
\begin{proof}
We will use induction on the length $n$ of the multiorder with the vacuous case $n=0$ taken as the induction base. Since canonical center, multiorder and normalized root blowings up are compatible with regular morphisms, we can replace $X$ by its smooth cover and assume that it is a scheme and $\cI$ possesses a maximal contacts flag $\ut$. It suffices to show that the multiorder of the coefficients ideal drops: if $\cC=\cC_X(\cI)$ and $\cC'=\cJ'^{-(d-1)!}\cC_X(\cI)$, where $\cJ'=\cJ\cO_{X'}$, then $\mord_{X'}(\cC')<\mord_X(\cC)$. Indeed, this implies that
$$(d-1)!\ \mord_{X'}(\cI')=\mord_{X'}(\cC_{X'}(\cI'))\le\mord_{X'}(\cC')<\mord_X(\cC)=(d-1)!\ \ud,$$
where $d=d_1=\ord_X(\cI)$, the equalities follow from Lemma~\ref{canonicallem} and the first inequality holds because $\cC'\subseteq \cC_{X'}(\cI')$ by Lemma~\ref{coefftransf}(i).

To prove the claim about $\cC$ we will restrict it to a maximal contact. Set $t=t_1$ for shortness, then $H=V(t)$ is a maximal contact to $\cI$ and hence also to $\cC$. In addition, $\cJ=\cJ_\ut(\cI)=[\ut^\ud]$ by Theorem~\ref{localcanonicalcenter}(iii) and $(d-1)!\ \mord_X(\cI)=(d!,\mord_H(\cC_H))$ by the inductive definition of $\cJ_\ut(\cI)$. In particular, $\mord_H(\cC_H)=(d-1)!\ \ud'$, where $\ud'=(d_2\.d_n)$. Let $H'$ be the strict transform of $H$, then $H'=\sqrt[N]{\Bl}_{\cJ_H}(H)^\nor$ by Lemma~\ref{strictlem}, where we set $\cJ_H=\cJ|_H$. If we view $H'$ as $\sqrt[N(d-1)!]{\Bl}_{\cJ^{(d-1)!}_H}(H)^\nor$ and set $\cJ'_{H'}=\cJ_H\cO_{H'}=\cJ\cO_{H'}$, then $\cC'_{H'}=\cC'|_{H'}=\cJ'^{-(d-1)!}_{H'}\cC_H$ is the transform of $\cC_H$ and hence $\mord_{H'}(\cC'_{H'})<(d-1)!\ \ud'$ by the induction assumption.

Now, assume to the contrary that $\mord_{X'}(\cC')\ge(d-1)!\ \ud$. By Lemma~\ref{coefftransf}(ii), $\ord_{X'}(\cC')=d!$ and $H'$ is a maximal contact to $\cC'$. We claim that this automatically implies that $\mord_{X'}(\cC')\le(d!,\mord_{H'}(\cC'_{H'}))$, and hence $(d-1)!\ \ud'\le\mord_{X'}(\cC'_{H'})$, contradicting the conclusion of the previous paragraph. Indeed, any $\cC'$-admissible center $\cK'$ with $\mord(\cK')\ge(d-1)!\ \ud$ is of order $d!$ and has $H'$ as its maximal contact, therefore by Theorem~\ref{centerindlem}(ii) $\cK'|_{H'}$ is a center of multiorder at least $(d-1)!\ \ud'$, and since $\cK'|_{H'}$ is $\cC'_{H'}$-admissible one obtains that also $\mord_{H'}(\cC'_{H'})\ge(d-1)!\ \ud'$.
\end{proof}

\subsubsection{Canonical blowing up}
For inductive reasons it was convenient to allow various numbers $N$ in the above theorem, but in practice we will always want to take the minimal possible number. So, if $X$ is a regular stack, $\cI$ is an ideal on $X$ possessing a canonical center $\cJ=\cJ(\cI)$ and $\mord(\cJ)=(d_1\.d_n)$, then by the {\em canonical $\cI$-admissible blowing up} we mean the normalized root blowing up $X'=\Bl_{\cJ^{1/N}}(X)\to X$, where $N$ is the minimal positive integral number such that $N/d_i\in\NN$ for $1\le i\le n$.

\subsection{Applications}\label{appsec}
We conclude the paper with a brief indication of the consequences of our main results \ref{localcanonicalcenter} and \ref{dropth}. We do not expand much because this is done precisely as in \cite{ATW-weighted}, and in the sequel paper \cite{dream_qe} we will first strengthen these two results to general excellent schemes and then deduce the same applications in the larger generality. So, here we just want to show what can be easily done already at this stage without developing a finer theory of canonical centers on excellent schemes.

\subsubsection{Dream principalization and embedded resolution}\label{dreamsec}
By {\em principalization} of an ideal $\cI$ on a regular stack $X_0$ we mean a sequence of blowings up (normalized, root, etc.) $X_n\to\dots\to X_0=X$ and transforms $\cI_{i+1}=\cI'_i$ such that each $f_i\:X_{i+1}\to X_i$ is $\cI_i$-admissible and $\cI_n=\cO_{X_n}$ is trivial. In particular, the ideal $\cI\cO_{X_n}$ is invertible. If, in addition, each $f_i$ depends only on $(X_i,\cI_i)$, the principalization is called {\em dream} or memoryless.

In the same manner, if one iteratively transforms the closed subscheme $Z=V(\cI)$ by strict transforms, and so $\cI_{Z'}\supseteq\cI'$ and $Z_n$ is regular, then a sequence is called an {\em embedded desingularization} of $Z$ in $X$. Again, the adjective ``dream'' is added if the process is memoryless.

It follows from Theorems \ref{localcanonicalcenter} and \ref{dropth} and the well orderedness of $\cQ_1$ that both dream principalization and embedded resolution are obtained just by applying the canonical blowing up iteratively and transforming the ideal accordingly. This extends \cite[Theorems~6.1.1 and 1.1.1]{ATW-weighted} from varieties to schemes (or stacks) with enough derivations and proves that the obtained constructions are functorial with respect to arbitrary regular morphisms.

\subsubsection{Non-embedded resolution}
The re-embedding principle from \cite[\S8.1]{ATW-weighted} holds for the dream embedded resolution, and one obtains as a corollary non-embedded resolution of qe stacks that smooth-locally can be embedded as a closed subscheme of constant codimension in a regular scheme with enough derivations. Independence of the embedding is checked on the formal level, on which the minimal embedding is essentially unique. This provides a generalization of \cite[\S8.1]{ATW-weighted} to a wide range of excellent stacks.

\subsubsection{Other categories}
Given a result for qe schemes which is functorial with respect to regular morphisms, a standard machinery allows to transfer it to the categories of various analytic spaces and formal schemes, e.g. see \cite[\S5.2]{non-embedded}. Moreover, the excellent rings corresponding to affinoid spaces, Stein compacts and formal varieties (but not arbitrary qe formal schemes, of course) have enough derivations. Therefore, already the results of this paper easily imply that the analogs of Theorems~\ref{localcanonicalcenter} and \ref{dropth} and \cite[Theorems~1.1.1, 6.1.1, 8.1.1]{ATW-weighted} hold for analytic spaces and formal varieties as well.

\subsubsection{The classical algorithm}
Finally, we note that the classical principalization algorithm described in \cite{Bierstone-Milman-funct} extends to the generality of schemes with enough derivations without any changes, and the same argument as was used in the proof of Theorem~\ref{localcanonicalcenter} shows that the obtained method is functorial with respect to all regular morphisms. The method of W{\l}odarczyk, which is based on homogenized coefficients ideals, can also be extended to schemes with enough derivations, but in this case independence of the maximal contact should be checked formally-locally at a point.

\bibliographystyle{amsalpha}
\bibliography{canonical_center}

\end{document}